\newcommand{\al}{\alpha}
\newcommand{\be}{\beta}
\newcommand{\ga}{\gamma}
\newcommand{\de}{\delta}
\newcommand{\la}{\lambda}
\newcommand{\om}{\omega}
\newcommand{\iy}{\infty}
\theoremstyle{plain}
\numberwithin{equation}{section}
\newtheorem{thm}{Theorem}[section]
\newtheorem{lem}[thm]{Lemma}
\theoremstyle{definition}
\newtheorem{example}[thm]{Example}
\newtheorem{ip}[thm]{Inverse Problem}
\newtheorem{df}[thm]{Definition}
\theoremstyle{remark}
\newtheorem{remark}[thm]{Remark}
\DeclareMathOperator{\diag}{diag}
\begin{document}

\begin{center}
{\Large\bf Linear differential operators with distribution \\[0.2cm] coefficients of various singularity orders}
\\[0.2cm]
{\bf Natalia P. Bondarenko} \\[0.2cm]
\end{center}

\vspace{0.5cm}

{\bf Abstract.} In this paper, the linear differential expression of order $n \ge 2$ with distribution coefficients of various singularity orders is considered. We obtain the associated matrix for the regularization of this expression. Furthermore, we present the new statements of inverse spectral problems that consist in the recovery of differential operators with distribution coefficients from the Weyl matrix on the half-line and on a finite interval. The uniqueness theorems for these inverse problems are proved by developing the method of spectral mappings. 

\medskip

{\bf Keywords:} higher-order differential operators; distribution coefficients; regularization; inverse spectral problems; Weyl matrix; uniqueness theorem.

\medskip

{\bf AMS Mathematics Subject Classification (2020):} 34A55 34B09 34B40 34L05 46F10  

\vspace{1cm}

\section{Introduction} \label{sec:intr}

This paper deals with the differential expression $\ell_n(y)$, $n \ge 2$,
defined as follows:
\begin{align} \nonumber
\ell_{2m+\tau}(y) := & y^{(2m + \tau)} + \sum_{k = 0}^{m-1} (-1)^{i_{2k} + k} (\sigma_{2k}^{(i_{2k})}(x) y^{(k)})^{(k)} \\ \label{defl} + & \sum_{k = 0}^{m + \tau - 2} (-1)^{i_{2k+1} + k + 1} \bigl[(\sigma_{2k+1}^{(i_{2k+1})}(x) y^{(k)})^{(k+1)} + (\sigma_{2k+1}^{(i_{2k+1})}(x) y^{(k+1)})^{(k)}\bigr], \: x \in \mathbb R_+,
\end{align}
where $m \in \mathbb N$, $\tau = 0, 1$, $n = 2m + \tau$, $\mathbb R_+ = (0, \infty)$; $(i_{\nu})_{\nu = 0}^{n-2}$ are integers such that $0 \le i_{2k + j} \le m - k - j$, $j = 0, 1$; $(\sigma_{\nu})_{\nu = 0}^{n-2}$ are regular functions, and the derivatives $\sigma_{\nu}^{(i_{\nu})}$ are understood in the sense of distributions.

Note that, if the functions $(\sigma_{\nu})_{\nu = 0}^{n-2}$ are sufficiently smooth, then the differential expression \eqref{defl} can be reduced to the form
\begin{equation} \label{horeg}
\ell_n(y) = y^{(n)} + \sum_{k = 0}^{n-2} p_k(x) y^{(k)}.
\end{equation}
However, if the coefficients are distributional, then it is convenient to consider differential expression in form \eqref{defl}. In particular, Mirzoev and Shkalikov \cite{MS16, MS19} have developed the regularization approach to the differential operators generated by $\ell_n(y)$ in the case of the maximal singularity orders $i_{2k + j} = m - k - j$, $j = 0, 1$


Linear differential operators $\ell_n(y)$ have a variety of physical applications, especially for $n = 2, 3, 4$. The second-order Sturm-Liouville (Schr\"odinger) operator $-\ell_2(y) = -y'' + q(x) y$ models string vibrations in classical mechanics, electron motion in quantum mechanics, and is also widely used in other branches of science. The third-order linear differential operators arise in the inverse problem method for integration of the nonlinear Boussinesq equation (see \cite{DTT82, McK81}), in mechanical problems of modeling thin membrane flow of viscous liquid and elastic beam vibrations (see \cite{BP19} and references therein). Spectral properties of the operator 
\begin{equation} \label{4th}
y^{(4)} + (p(x)y)' + q(x)y,
\end{equation}
which is the special case of $\ell_4(y)$, were studied in connection with the Euler-Bernoulli operator $\frac{1}{b(x)}(a(x)u'')''$ describing the beam vibrations (see, e.g., in \cite{BK17}). In \cite{Pol22}, the operator \eqref{4th} was considered in relation to the analysis of thin liquid polymer films of nanometer thickness.

The Schr\"odinger operators with distribution potentials are widely used in quantum mechanics for describing the interaction between individual particles \cite{Alb05}. Some aspects of spectral theory for the fourth-order differential operators with distribution coefficients were recently investigated in \cite{UB20, ZAB22}. The development of the general theory for higher-order differential operators with distribution coefficients could unify the approaches to specific problems in various applications, as well as causes interest from the purely mathematical point of view.

The goal of this paper is two-fold. First, we construct the regularization matrix for the differential expression \eqref{defl} with any derivative orders $(i_{\nu})_{\nu = 0}^{n-2}$. Second, we aim to study inverse spectral problems for differential operators generated by $\ell_n(y)$. Let us describe the background and the main results for each of these two issues in more detail. 

\subsection{Regularization}

To the best of the author's knowledge, ordinary differential operators with distribution coefficients have been investigated for more than 60 years.
The existence and uniqueness issues of initial value problem solution were considered in \cite{Kurz59, Pfaff79, Lev74, White79, Derr88} and other papers. A short overview of the early results in this direction can be found in \cite{KM19}. The most common way to treat differential equations with distribution coefficients is the reduction of such equations to first-order systems by introducing quasi-derivatives. In particular, for investigation of the second-order equation
$$
y'' + \sigma'(x) y = 0, \quad \sigma \in L_{2,loc}(a, b),
$$
Pfaff \cite{Pfaff79} transformed it to the system
$$
\begin{bmatrix} y \\ y^{[1]} \end{bmatrix}' = 
\begin{bmatrix}
-\sigma & 1 \\
-\sigma^2 & \sigma
\end{bmatrix}
\begin{bmatrix} y \\ y^{[1]} \end{bmatrix},
$$
where $y^{[1]} := y' + \sigma y$. In \cite{SS99}, Savchuk and Shkalikov started the systematic study of spectral theory for the Sturm-Liouville operators with distribution potentials, based on the same regularization. In the monograph~\cite{Weid87}, Weidmann studied the minimal and the maximal operators, deficiency indices, self-ajoint extensions, and some other issues of spectral theory for a class of higher-order matrix differential operators which includes operators generated by \eqref{defl} with $i_{2k} = 1$, $i_{2k+1} = 0$, in particular, the Sturm-Liouville operator with potential of $W_2^{-1}(a,b)$.

Later on, Mirzoev and Shkalikov \cite{MS16} have obtained the regularization matrix for the even-order differential expressions generalizing \eqref{defl} with $n = 2m$, $i_{2k+j} = m-k-j$, $j = 0, 1$. The analogous construction for the odd-order operators has been provided in \cite{MS19}. It is worth noting that, on a finite interval, the Mirzoev-Shkalikov case generalizes all the others with $0 \le i_{2k+j} \le m-k-j$, $j = 0, 1$. However, on the half-line, the classes of $\sigma \in L_1(\mathbb R_+)$ and $\sigma' \in L_1(\mathbb R_+)$ are not nested with one another. Therefore, the expression $\ell_n(y)$ is worth being studied for various $(i_\nu)_{\nu = 0}^{n-2}$.

Relying on the ideas of \cite{NZS99, Vlad04}, Vladimirov \cite{Vlad17} has developed an alternative approach for regularization of differential operators represented by bilinear forms. The construction of \cite{Vlad17} can be applied to a wider class of operators than \cite{MS16, MS19}. In particular, we use it in the present paper to obtain the regularization matrix for \eqref{defl}.  

In this paper, we assume that the coefficients at $y^{(n)}$ and $y^{(n-1)}$ equal $1$ and $0$, respectively. This assumption is natural for studying inverse spectral problems. However, in \cite{MS16, MS19, Vlad17}, the coefficients at $y^{(n)}$, $y^{(n-1)}$ can be arbitrary functions of certain classes. Also, note that we consider the cases of even and odd $n$ together, and the odd case appears to be easier for the purposes of this paper. 

Following the strategy of \cite{Vlad17}, we obtain the quadratic form and then construct the matrix $F(x) = [f_{k,j}(x)]_{k, j = 1}^n$ by a certain rule $F = \mathscr F_I(\Sigma)$, $I = (i_{\nu})_{\nu = 0}^{n-2}$, $\Sigma = (\sigma_{\nu})_{\nu = 0}^{n-2}$, associated with the differential expression \eqref{defl}. The matrix $F(x)$ has to fulfill the following conditions:
\begin{align*}
& f_{k, j}(x) \equiv 0, \quad k + 1 < j, \tag{i} \\
& f_{k, k + 1}(x) \equiv 1, \quad k = \overline{1, n-1}, \tag{ii}
\end{align*}
and $f_{k, j} \in L_1(\mathbb R_+)$, $0 \le j \le k \le n$.

\medskip

By using the matrix $F(x)$, define the quasi-derivatives:
\begin{equation} \label{quasi}
y^{[0]} := y, \quad y^{[k]} = (y^{[k-1]})' - \sum_{j = 1}^k f_{k,j} y^{[j-1]}, \quad k = \overline{1,n},
\end{equation}
and the domain
\begin{equation} \label{defDF}
\mathcal D_F = \{ y \colon y^{[k]} \in AC_{loc}(\mathbb R_+), \, k = \overline{0, n-1} \}.
\end{equation}

Our goal is to determine the rule $\mathscr F_I$ in such a way that $\ell_n(y) = y^{[n]}$ for any $y \in \mathcal D_F$. Then, instead of the equation $\ell_n(y) = \la y$, we can consider the equivalent system
\begin{equation} \label{sys}
{\vec y}\,' = (F(x) + \Lambda) \vec y, 
\end{equation}
where $\vec y(x) = \mbox{col} ( y^{[0]}(x), y^{[1]}(x), \ldots, y^{[n-1]}(x))$, $y \in \mathcal D_F$, $\la$ is the spectral parameter, $\Lambda := \la E_{n,1}$, $E_{n,1}$ is the $(n \times n)$ matrix whose element at $(n,1)$ equals $1$ and all the other elements equal $0$. Indeed, the first $(n-1)$ rows of the system~\eqref{sys} correspond to the quasi-derivative definition \eqref{quasi} and the $n$-th row is $y^{[n]} = \la y$. For the regular case $i_{\nu} = 0$, $\nu = \overline{0, n-2}$, such construction is well-known (see \cite{EM99}). The regularization in this paper generalizes the both regular and the Mirzoev-Shkalikov cases.

By using the special construction of the matrix $F(x)$ for the differential expression \eqref{defl}, we prove some important assertions (Lemmas~\ref{lem:PF-half}, \ref{lem:PF-finite}) for study of inverse problems.

\subsection{Inverse problems}

Inverse problems of spectral analysis consist in the recovery of operators from their spectral information. In terms of application, such problems correspond to determining unknown medium properties from some measured quantities or to constructing systems with desired characteristics.

The classical results of inverse spectral theory have been obtained for the Sturm-Liouville operator $-y'' + q(x) y$ with integrable potential $q(x)$ by Borg~\cite{Borg46}, Marchenko \cite{Mar77}, Levitan \cite{Lev84}, etc. The recovery of the Sturm-Liouville operators with singular potentials of classes $W_2^{\theta}$, $\theta \ge -1$, has been studied by Hryniv et al \cite{HM-sd, HM-2sp, HP12, HM20}, Freiling et al \cite{FIY08}, Savchuk and Shkalikov \cite{SS10}, Guliyev \cite{Gul19}. We also mention the studies of Mykytuyk and Trush \cite{MT09}, Eckhardt et al \cite{Eckh14, Eckh15}, Bondarenko \cite{Bond21-MN, Bond21-AMP} concerning inverse problems for the matrix Sturm-Liouville operator with distribution potentials.

Investigation of inverse problems for the higher-order differential operators \eqref{horeg} with $n > 2$ causes principal difficulties, since the classical transformation operator method (see \cite{Mar77, Lev84}) is ineffective for them. Relying on the ideas of Leibenson \cite{Leib66, Leib71}, Yurko has developed \textit{the method of spectral mappings}. This method has been used to construct the inverse problem theory for the higher-order differential operators \eqref{horeg} with regular coefficients on a finite interval and on the half-line (see \cite{Yur92, Yur93, Yur95-mn, Yur95-ms, Yur02}). Inverse problems on the line were studied by Beals et al \cite{Beals85, Beals88}. However, for the differential operators \eqref{defl} with distribution coefficients, there is still no general inverse problem theory. The first steps in this direction have been taken in \cite{Bond21}, where the uniqueness theorem has been proved for the inverse problem on a finite interval.

In this paper, we mostly focus on the inverse problem for the differential expression \eqref{defl} on the half-line. Let us provide the inverse problem statement.

Let $U = [u_{k,j}]_{k,j = 1}^n$ be a constant $(n \times n)$ matrix of form $U = P_U L_U$, where $P_U$ is a permutation matrix and $L_U$ is a unit lower-triangular matrix. This means that the matrix $P_U$ has the elements equal to $1$ at the positions $(k, p_k + 1)$, $k = \overline{1, n}$, where $\{ p_k \}_{k = 1}^n$ is the permutation of the numbers $\{ 0, 1, \ldots, n-1 \}$, and all the other elements are zero. The entries of $L_U = [l_{k,j}]_{k,j = 1}$ satisfy $l_{k,j} = \de_{k,j}$, $1 \le k \le j \le n$, where $\de_{k,j}$ is the Kronecker delta.

Consider the boundary value problem $\mathcal L_I(\Sigma, U)$ for the equation
\begin{equation} \label{eqv}
\ell_n(y) = \la y, \quad x \in \mathbb R_+,
\end{equation}
with the boundary conditions
\begin{equation} \label{bc}
U_s(y) := y^{[p_s]}(0) + \sum_{j = 1}^{p_s} u_{s,j} y^{[j-1]}(0) = 0, \quad s = \overline{1, n}.
\end{equation}

Denote by $\{ C_k(x,\la) \}_{k = 1}^n$ and $\{ \Phi_k(x, \la)\}_{k = 1}^n$ the solutions of equation~\eqref{eqv} satisfying the initial conditions
\begin{equation} \label{initC1}
U_s (C_k) = \de_{s,k}, \quad s = \overline{1, n},
\end{equation}
and the boundary conditions
\begin{equation} \label{bcPhi1}
    U_s(\Phi_k) = \de_{s,k}, \quad s = \overline{1, k}, \qquad
    \Phi_k(x, \la) = O(\exp(\rho \om_k x)), \quad x \to \infty,
\end{equation}
respectively. Here $\la = \rho^n$ and $\{ \om_k \}_{k = 1}^n$ are the roots of the equation $\om^n = 1$ numbered so that
$$
\mbox{Re}\, (\rho \om_1) \le \mbox{Re}\,(\rho \om_2) \le \ldots \le \mbox{Re}\,(\rho \om_m).
$$
The functions $\Phi_k(x, \la)$ are called \textit{the Weyl solutions} of \eqref{eqv}.

Consider the matrix functions $C(x, \la) = [\vec C_k(x, \la)]_{k = 1}^n$ and $\Phi(x, \la) = [\vec \Phi_k(x, \la)]_{k = 1}^n$. Since the columns of $C(x, \la)$ and $\Phi(x, \la)$ form two fundamental systems of solutions of \eqref{sys}, the relation $\Phi(x, \la) = C(x, \la) M(\la)$ is fulfilled, where $M(\la) = [M_{s,k}(\la)]_{s,k = 1}^n$ is called \textit{the Weyl matrix} and its entries $M_{s,k}(\la)$ are called \textit{the Weyl functions}. The conditions \eqref{initC1} and \eqref{bcPhi1} imply that $M(\la)$ is a unit lower-triangular matrix.

The Weyl functions and their generalizations are natural spectral characteristics in the inverse problem theory for various classes of differential operators and pencils (see, e.g., \cite{FY01}). The defined $M(\la)$ is analogous to the Weyl matrix used by Yurko \cite{Yur92, Yur93, Yur02} for solving inverse problems for the higher-order differential operators with regular coefficients. In this paper, we consider the following problem.

\begin{ip} \label{ip:main}
Suppose that $P_U$ is known a priori.
Given the Weyl matrix $M(\la)$, find $\Sigma$ and $U$.
\end{ip}

We study analytic properties of $M(\la)$ (Theorem~\ref{thm:Weyl}) and obtain the asymptotics of the Weyl solutions (Lemma~\ref{lem:asymptPhi}), by using the Birkhoff solutions of equation \eqref{eqv} with certain asymptotics as $|\rho| \to \infty$ (Theorems~\ref{thm:Birky1}, \ref{thm:Birkz}). By using the method of spectral mappings, we prove the uniqueness theorem for Inverse Problem~\ref{ip:main}. Furthermore, we consider the inverse problem on a finite interval previously studied in \cite{Bond21}. We discuss the usage of regularizations with various $(i_{\nu})_{\nu = 0}^{n-2}$ and the recovery of the boundary condition coefficients in a finite interval case. Note that our results are novel even in the regular case $i_{\nu} = 0$, $\nu = \overline{0, n-2}$. Since the method of spectral mappings is constructive, in the future, it can be used for developing an algorithm solving the studied inverse problems and for investigating existence of their solution.

In addition, we consider the examples for the orders $n = 2$ and $n = 4$, which often arise in applications. It is shown that the results of this paper generalize the previously known results for the Sturm-Liouville operators with regular and singular potentials. Also, we compare the inverse problems on the half-line and on a finite interval.

\medskip

The paper is organized as follows. Section~\ref{sec:reg} is devoted to the regularization issues. In Section~\ref{sec:birk}, theorems on Birkhoff systems are formulated. In Section~\ref{sec:ip}, we consider the inverse problem on the half-line, and in Section~\ref{sec:finite}, on a finite interval. Section~\ref{sec:ex} contains examples for $n = 2$ and $n = 4$.

\section{Regularization} \label{sec:reg}

In this section, we obtain the associated matrix $F(x)$ for regularization of the differential expression \eqref{defl} and study the properties of this matrix needed for the inverse problem theory.

Suppose that
\begin{align*} 
& I = (i_{\nu})_{\nu = 0}^{n-2} \in \mathcal I_n, \qquad
\mathcal I_{2m+\tau} := \left\{ (i_{\nu})_{\nu = 0}^{2m + \tau - 2} \colon \begin{array}{l} 
0 \le i_{2k} \le m-k, \, k = \overline{0, m-1}, \\
0 \le i_{2k+1} \le m - k - 1, \, k = \overline{0, m + \tau - 2}
\end{array}
\right\},
\\
& \Sigma = (\sigma_{\nu})_{\nu = 0}^{n-2} \in \Sigma_{I,loc}, \qquad
\Sigma_{I,loc} := \left\{ (\sigma_{\nu})_{\nu = 0}^{n-2} \colon
\begin{array}{l}
\sigma_{\nu} \in L_{1,loc}(\mathbb R_+), \quad \nu = \overline{0, n-2}, \\
\sigma_{\nu} \in L_{2,loc}(\mathbb R_+), \quad \nu \in K(I)
\end{array}
\right\},
\end{align*}
\vspace*{-20pt}
\begin{align*}
K(I) \subseteq \{ 0, 1, \ldots, n-2 \}, \qquad K(I) = \varnothing \quad & \text{if $n = 2m + 1$}, \\
\left.\begin{array}{rcl}
\nu = 2k \in K(I) & \Leftrightarrow & i_{2k} = m-k, \\
\nu = 2k+1 \in K(I) & \Leftrightarrow & i_{2k+1} = m-k-1
\end{array}\right\} \quad & \text{if $n = 2m$}.
\end{align*}

$\Sigma_{I}$ is defined similarly to $\Sigma_{I, loc}$ with $L_{j,loc}$ replaced by $L_j$, $j = 1, 2$. For the regularization matrix construction, it is sufficient to assume that $\Sigma \in \Sigma_{I, loc}$. The integrability of $\sigma_{\nu}(x)$ on $\mathbb R_+$ is important in the next sections for obtaining the Birkhoff solutions and for investigation of inverse problems.

Denote by $\mathfrak D$ the space $C_0^{\infty}(\mathbb R_+)$ of infinitely differentiable functions with a finite support on $\mathbb R_+$ and by $\mathfrak D'$ the space of all the continuous linear functionals on $\mathfrak D$.
For $z \in \mathfrak D$ and $f \in \mathfrak D'$, we use the notation $( f, z ) = fz$. In particular, $( f, z ) = \int_0^{\infty} f(x) z(x) \, dx$ if $f \in L_{1,loc}(\mathbb R_+)$.

\begin{lem} \label{lem:quad}
Suppose that $y \in W_{1,loc}^{m + \tau}(\mathbb R_+)$ if $K(I) = \varnothing$ and $y \in W_{2,loc}^m(\mathbb R_+)$ otherwise. Then $\ell_n(y) \in \mathfrak D'$ and 
\begin{equation} \label{quad}
( \ell_n(y), z ) = (-1)^m ( y^{(m + \tau)}, z^{(m)} ) + \sum_{r, j = 0}^m ( q_{r,j} y^{(r)}, z^{(j)} ), \quad z \in \mathfrak D,
\end{equation}
where
\begin{gather} \label{defQ}
[q_{r,j}]_{r,j = 0}^m = \mathscr Q_I(\Sigma) := \sum_{\nu = 0}^{n-2} \sigma_{\nu}(x) \chi_{\nu, i_{\nu}}, \quad \chi_{\nu, i} = [\chi_{\nu,i;r,j}]_{r,j = 0}^m, \\ \label{defchi}
\begin{array}{rl}
\chi_{2k, i; s + k, i - s + k} = & C_i^s, \quad s = \overline{0, i}, \\
\chi_{2k+1, i; s+k, i + 1- s + k} = & C_{i + 1}^s - 2 C_i^{s-1}, \quad s = \overline{0, i+1}, 
\end{array}
\end{gather}
and all the other entries $\chi_{\nu, i; r,j}$ equal zero. Here and below, $C_i^s = \frac{i!}{s!(i-s)!}$ are the binomial coefficients, $C_i^{-1} := 0$.
\end{lem}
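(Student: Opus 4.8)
The plan is to prove \eqref{quad} by reducing to a single summand of \eqref{defl} and pushing every derivative off the distributional factor $\sigma_\nu^{(i_\nu)}$ through successive integrations by parts, exploiting that both sides of \eqref{quad} are linear in $\Sigma = (\sigma_\nu)_{\nu=0}^{n-2}$. I would first note that establishing \eqref{quad} simultaneously yields $\ell_n(y) \in \mathfrak D'$, since its right-hand side is visibly a continuous linear functional on $\mathfrak D$: each product $q_{r,j} y^{(r)}$ lies in $L_{1,loc}(\mathbb R_+)$. Indeed, the indices occurring in \eqref{defchi} obey $r \le m$; if $r = m$ and $K(I) \ne \varnothing$, the corresponding $\nu$ lies in $K(I)$, so $\sigma_\nu \in L_{2,loc}$, and together with $y^{(m)} \in L_{2,loc}$ this gives $\sigma_\nu y^{(m)} \in L_{1,loc}$ by the Cauchy--Schwarz inequality, while for $r < m$ the derivative $y^{(r)}$ is continuous by Sobolev embedding and $\sigma_\nu \in L_{1,loc}$. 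When $K(I) = \varnothing$ the stronger hypothesis $y \in W_{1,loc}^{m+\tau}$ makes $y^{(m)}$ continuous, so all products remain locally integrable. Every integration by parts below is free of boundary terms because $z$ has compact support inside the open half-line.

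Turning to the computation, the leading term gives $(y^{(2m+\tau)}, z) = (-1)^m (y^{(m+\tau)}, z^{(m)})$ after $m$ integrations by parts, matching the first term of \eqref{quad}. For a fixed even summand $\nu = 2k$, I would move the outer $k$ derivatives onto $z$, producing a factor $(-1)^k$ and the pairing $(\sigma_{2k}^{(i_{2k})} y^{(k)}, z^{(k)})$; then, by the definition of the distributional derivative, transfer the remaining $i_{2k}$ derivatives onto $y^{(k)} z^{(k)}$, producing a factor $(-1)^{i_{2k}}$ and leaving $\sigma_{2k}$ undifferentiated. Expanding $(y^{(k)} z^{(k)})^{(i_{2k})}$ by the Leibniz rule yields $\sum_{s=0}^{i_{2k}} C_{i_{2k}}^s \, y^{(k+s)} z^{(k+i_{2k}-s)}$. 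The three sign factors combine as $(-1)^{i_{2k}+k}(-1)^k(-1)^{i_{2k}} = 1$, so this summand contributes $C_{i_{2k}}^s$ to the coefficient at position $(r,j) = (k+s,\, k+i_{2k}-s)$, which is exactly the even entry in \eqref{defchi}.

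The odd summands are the crux. For fixed $\nu = 2k+1$ (write $i = i_{2k+1}$), the bracket in \eqref{defl} splits into two pieces, treated in parallel. In $((\sigma^{(i)} y^{(k)})^{(k+1)}, z)$ I move $k+1$ outer derivatives onto $z$ and $i$ further derivatives onto $y^{(k)} z^{(k+1)}$; the signs combine to $(-1)^{i+k+1}(-1)^{k+1}(-1)^{i} = +1$, giving $+C_i^s$ at position $(k+s,\, k+1+i-s)$. In $((\sigma^{(i)} y^{(k+1)})^{(k)}, z)$ the outer derivative order drops by one, so the signs combine to $(-1)^{i+k+1}(-1)^{k}(-1)^{i} = -1$, giving $-C_i^s$ at position $(k+1+s,\, k+i-s)$. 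Both families of positions lie on the anti-diagonal $r+j = 2k+i+1$; writing a point on it as $(r,j) = (k+s', k+i+1-s')$, the first piece contributes $+C_i^{s'}$ and the second $-C_i^{s'-1}$ (with $C_i^{-1} = 0$ and $C_i^{s'} = 0$ for $s' > i$). Hence the total coefficient is $C_i^{s'} - C_i^{s'-1}$, and Pascal's identity $C_{i+1}^{s'} = C_i^{s'} + C_i^{s'-1}$ recasts it as $C_{i+1}^{s'} - 2C_i^{s'-1}$, precisely the odd entry in \eqref{defchi}. Summing the even and odd contributions over $\nu$ and adding the leading term reproduces \eqref{quad} with $[q_{r,j}] = \mathscr Q_I(\Sigma)$.

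I expect the main obstacle to be organizational rather than conceptual: correctly tracking the three sources of signs (the explicit prefactor in \eqref{defl}, the outer integrations by parts, and the distributional derivative of $\sigma_\nu$) and the shifting index ranges of the two odd pieces, and then recognizing that those pieces align on a common anti-diagonal so their coefficients merge through Pascal's rule into the stated closed form. A secondary point requiring care is the legitimacy of the distributional pairings at the minimal regularity assumed for $y$: this is exactly what the case split $W_{1,loc}^{m+\tau}$ versus $W_{2,loc}^m$ secures, ensuring that once all derivatives have been transferred off $\sigma_\nu$ the surviving products $\sigma_\nu\, y^{(r)} z^{(j)}$ are genuinely integrable.
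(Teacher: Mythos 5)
Your proof is correct and takes essentially the same approach as the paper: the paper's proof writes the distributional identities $(\sigma^{(i)} y^{(k)})^{(k)} = \sum_{s=0}^{i} (-1)^s C_i^s \, (\sigma y^{(s+k)})^{(i-s+k)}$ and their odd analogue (where the two odd pieces are combined by the same reindexing and Pascal's rule, yielding $C_{i+1}^s - 2C_i^{s-1}$), and then pairs with $z$ --- which is precisely your computation (outer integrations by parts, transfer of the $i$ derivatives off $\sigma_\nu$, Leibniz expansion, anti-diagonal merge) carried out in the opposite order, with the same conclusion that $\ell_n(y) \in \mathfrak D'$ follows from local integrability of the products $\sigma_\nu y^{(r)}$. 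One minor slip worth fixing: when $K(I) = \varnothing$ and $n = 2m$ (so $\tau = 0$), the hypothesis $y \in W_{1,loc}^{m}$ does \emph{not} make $y^{(m)}$ continuous; the correct reason integrability survives in this subcase is the fact you already observed --- nonzero entries of $\chi_{\nu,i_\nu}$ in row or column $m$ occur only for maximal $i_\nu$, hence not at all when $K(I) = \varnothing$ with $n = 2m$ --- so no product $\sigma_\nu y^{(m)}$ ever appears there.
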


Note that, at the right-hand side of \eqref{quad}, all the functions are regular, so \eqref{quad} describes an action of the functional $\ell_n(y) \in \mathfrak D'$ on an arbitrary $z \in \mathfrak D$. In fact, the relation~\eqref{quad} holds for $z$ of a wider class than $\mathfrak D$.

\begin{proof}[Proof of Lemma~\ref{lem:quad}]
Let $i, k \ge 0$. Formal calculations show that
\begin{align} \nonumber
\sigma^{(i)} y & = \sum_{s = 0}^i (-1)^s C_i^s (\sigma y^{(s)})^{(i - s)}, \\ \nonumber
(\sigma^{(i)} y^{(k)})^{(k)} & = \sum_{s = 0}^i (-1)^s C_i^s (\sigma y^{(s + k)})^{(i - s + k)}, \\ \label{quad-even}
( (\sigma^{(i)} y^{(k)})^{(k)}, z ) & = \sum_{s = 0}^i (-1)^{i + k} C_i^s ( \sigma y^{(s + k)}, z^{(i - s + k)}), \quad z \in \mathfrak D.
\end{align}

Clearly, under the conditions of the lemma, $\sigma_{2k} y^{(s + k)} \in L_{1,loc}(\mathbb R_+)$ for $s = \overline{0, i_{2k}}$, $k = \overline{0, n-2}$, so we conclude that $(\sigma_{2k}^{(i_{2k})} y^{(k)})^{(k)} \in \mathfrak D'$. 

Analogously,
\begin{align} \nonumber
    (\sigma^{(i)} y^{(k)})^{(k+1)} + (\sigma^{(i)} y^{(k+1)})^{(k)} & = \sum_{s = 0}^{i + 1} (-1)^s (C_{i+1}^s - 2 C_i^{s-1})(\sigma y^{(s + k)})^{(i + 1 - s + k)}, \\ \label{quad-odd}
    ( (\sigma^{(i)} y^{(k)})^{(k+1)} + (\sigma^{(i)} y^{(k+1)})^{(k)}, z) & = \sum_{s = 0}^{i + 1} (-1)^{i + k + 1} (C_{i + 1}^s - 2 C_i^s) ( \sigma y^{(s + k)}, z^{(i + 1 - s + k)}),
\end{align}
where $z \in \mathfrak D$. Taking $\sigma = \sigma_{2k+1}$, $i = i_{2k+1}$, we conclude that the terms with odd indices $(2k+1)$ in \eqref{defl} belong to $\mathfrak D'$ under the conditions of the lemma. Therefore, $\ell_n(y) \in \mathfrak D'$. Combining \eqref{defl}, \eqref{quad-even} with $\sigma = \sigma_{2k}$, $i = i_{2k}$, and \eqref{quad-odd} with $\sigma = \sigma_{2k+1}$, $i = i_{2k+1}$, we arrive at \eqref{quad}.
\end{proof}

Now, following the approach of Valdimirov \cite{Vlad17}, we are going to construct the matrix $F(x)$ of quasi-derivative coefficients by using the matrix $Q = [q_{l,s}]_{l,s = 0}^m$ of the quadratic form in \eqref{quad}. Define the spaces of matrix functions $\mathfrak Q_{n, loc}$ and $\mathfrak F_{n,loc}$ as follows:
\begin{gather*}
\mathfrak Q_{n, loc} = \left\{ Q = [q_{l,s}]_{l,s = 0}^m \colon q_{l,s} \in L_{1,loc}(\mathbb R_+); q_{l,m}, q_{m,s} \in L_{2,loc}(\mathbb R_+) \: \text{if $n = 2m$}, \: l,s = \overline{0, m} \right\}, \\
\mathfrak F_{n,loc} = \left\{ F = [f_{k,j}]_{k,j = 1}^n \colon \begin{array}{l}
\text{(i), (ii), (iii) are fulfilled}, \\ 
f_{k,j} \in L_{1,loc}(\mathbb R_+), \, k = \overline{m+1, 2m+\tau}, \, j = \overline{1, m+\tau}, \\
f_{k,m+1}, f_{m,j} \in L_{2,loc}(\mathbb R_+), \, k = \overline{m+1, 2m}, \, j = \overline{1, m}, \, \text{if $n = 2m$}
\end{array}
\right\}, \\
f_{k,j} = 0, \quad k = \overline{1, m - 1 + \tau},\, j \le k \:\: \text{and} \:\: j = \overline{m + 2, n},\, k \ge j. \tag{iii}
\end{gather*}
The spaces $\mathfrak Q_n$ and $\mathfrak F_n$ are defined similarly with $L_{1,loc}$ and $L_{2,loc}$ replaced by $L_1$ and $L_1 \cap L_2$, respectively. It follows from the definition of the mapping $\mathscr Q_I$ in Lemma~\ref{lem:quad} that 
\begin{equation} \label{mapQ}
\mathscr Q_I \colon \Sigma_{I,loc} \to \mathfrak Q_{n,loc} \quad \text{and} \quad \mathscr Q_I \colon \Sigma_{I} \to \mathfrak Q_{n}.
\end{equation}

Define the mapping $\mathscr S_n \colon \mathfrak Q_{n, loc} \to \mathfrak F_{n,loc}$ acting as follows:
\begin{align} \nonumber
    & F = \mathscr S_n(Q), \quad Q = [q_{l,s}]_{l,s = 0}^m \in \mathfrak Q_{n, loc}, \quad F = [f_{k,j}]_{k,j = 1}^n \in \mathfrak F_{n,loc}, \\ \label{defF}
    & n = 2m \colon \quad \begin{cases}
                            f_{m,j} := (-1)^{m+1} q_{j-1,m}, \quad j = \overline{1, m}, \\
                            f_{k,m+1} := (-1)^{k+1} q_{m,2m-k}, \quad k = \overline{m+1, 2m}, \\
                            f_{k,j} := (-1)^{k+1} q_{j-1,2m-k} + (-1)^{m+k} q_{j-1,m} q_{m,2m-k}, \quad k = \overline{m+1,2m}, \, j = \overline{1,m},
                        \end{cases} \\ \nonumber
    & n = 2m+1 \colon \quad f_{k,j} := (-1)^k q_{j-1, 2m+1-k},\, k = \overline{m+1, 2m+1}, \, j = \overline{1, m+1}.        
\end{align}
All the elements $f_{k,j}$ undefined here are uniquely specified by (i), (ii), and (iii). Obviously, 
\begin{equation} \label{mapS}
\mathscr S_n \colon \mathfrak Q_{n, loc} \to \mathfrak F_{n,loc} \quad \text{and} \quad \mathscr S_n \colon \mathfrak Q_{n} \to \mathfrak F_{n}.
\end{equation}

The inverse mapping $\mathscr S_n \colon \mathfrak Q_{n, loc} \to \mathfrak F_{n,loc}$ is given by the formulas:
\begin{align} \nonumber
    & Q = \mathscr S_n^{-1}(F), \quad F = [f_{k,j}]_{k,j = 1}^n \in \mathfrak F_{n,loc}, \quad Q = [q_{l,s}]_{l,s = 0}^m \in \mathfrak Q_{n, loc}, \\ \label{invS}
    & n = 2m \colon \quad \begin{cases}
                            q_{j-1,m} := (-1)^{m+1} f_{m,j}, \quad j = \overline{1, m}, \\
                            q_{m,2m-k} := (-1)^{k+1} f_{k,m+1}, \quad k = \overline{m+1, 2m}, \\
                            q_{j-1,2m-k} := (-1)^{k+1} (f_{k,j} - f_{k,m+1} f_{m,j}), \quad k = \overline{m+1,2m}, \, j = \overline{1,m}, \\
                            q_{m,m} := 0,
                        \end{cases} \\ \nonumber
    & n = 2m+1 \colon \quad q_{j-1, 2m+1-k} := (-1)^k f_{k,j},\, k = \overline{m+1, 2m+1}, \, j = \overline{1, m+1}.        
\end{align}

Define the mapping $\mathscr F_I(\Sigma) = \mathscr S_n(\mathscr Q_I(\Sigma))$. For a fixed $I \in \mathcal I_n$, the relations \eqref{mapQ} and \eqref{mapS} imply
\begin{equation} \label{mapF}
\mathscr F_I \colon \Sigma_{I, loc} \to \mathfrak F_{n,loc} \quad \text{and} \quad \mathscr F_I \colon \Sigma_I \to \mathfrak F_n.
\end{equation}

In fact, the above formulas defining the mapping $\mathscr S_n$ are the special case of the formulas on p.~6 of \cite{Vlad17}. In the Mirzoev-Shkalikov case $i_{2k+j} = m - k - j$, $j = 0, 1$, the matrix function $F(x) = \mathscr F_I(\Sigma)$ coincides with the associated matrices obtained in \cite{MS16, MS19}.

The following theorem establishes the equivalence of equation \eqref{eqv} and the system \eqref{sys} for $y \in \mathcal D_F$, $F(x) = \mathscr F_I(\Sigma)$.

\begin{thm} \label{thm:quasi}
Suppose that $\Sigma \in \Sigma_{I,loc}$ and $F = \mathscr F_I(\Sigma)$. Let the quasi-derivatives $y^{[j]}$, $j = \overline{0, n}$, and the domain $\mathcal D_F$ be defined by \eqref{quasi} and \eqref{defDF}, respectively.
Then, for any $y \in \mathcal D_F$, $\ell_n(y) \in L_{1,loc}(\mathbb R_+)$ and $\ell_n(y) = y^{[n]}$.
\end{thm}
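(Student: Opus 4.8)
The plan is to reduce the pointwise claim to the distributional identity $(\ell_n(y),z)=(y^{[n]},z)$ for every $z\in\mathfrak D$; since Lemma~\ref{lem:quad} already represents the left-hand side by the quadratic form \eqref{quad}, and both sides will be seen to lie in $L_{1,loc}(\mathbb R_+)$, this suffices. Two preliminary facts are needed. First, $y\in\mathcal D_F$ falls within the scope of Lemma~\ref{lem:quad}: conditions (i)--(iii) make the top-left block of $F$ vanish, so $y^{[k]}=y^{(k)}$ for $k=\overline{0,m-1+\tau}$, whence $y\in W_{1,loc}^{m+\tau}(\mathbb R_+)$, and in the even case the explicit form of $y^{[m]}$ recorded below, combined with $q_{r,m}\in L_{2,loc}$, upgrades this to $y\in W_{2,loc}^m(\mathbb R_+)$. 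Second, $y^{[n]}=(y^{[n-1]})'-\sum_{j=1}^n f_{n,j}y^{[j-1]}$ exhibits $y^{[n]}$ as $(y^{[n-1]})'\in L_{1,loc}$ plus products $f_{n,j}y^{[j-1]}$, each locally integrable because $f_{n,j}\in L_{1,loc}$ (with $f_{n,m+1}\in L_{2,loc}$ in the even case) while $y^{[j-1]}$ is continuous; hence $y^{[n]}\in L_{1,loc}(\mathbb R_+)$.

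Next I would record the structural consequences of \eqref{defF} and (i)--(iii). Besides $y^{[k]}=y^{(k)}$ for $k\le m-1+\tau$, condition (iii) also kills the entries $f_{k,j}$ with $m+2\le j\le k$, so that for $k=\overline{m+1,n}$ the recursion \eqref{quasi} truncates to $y^{[k]}=(y^{[k-1]})'-\sum_{j=1}^{m+1}f_{k,j}y^{[j-1]}$. I would likewise compute the middle quasi-derivative: $y^{[m]}=y^{(m)}$ when $\tau=1$, while for $\tau=0$, using $f_{m,j}=(-1)^{m+1}q_{j-1,m}$, one obtains $y^{[m]}=y^{(m)}+(-1)^m\sum_{r=0}^{m-1}q_{r,m}y^{(r)}$.

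The heart of the argument is to evaluate $(y^{[n]},z)=\int_0^\infty y^{[n]}z\,dx$ by a telescoping integration by parts. Writing $B_k:=\int_0^\infty y^{[k]}z^{(n-k)}\,dx$, the truncated recursion and a single integration by parts (with vanishing boundary terms, as $z$ is compactly supported in $\mathbb R_+$) give $B_k=-B_{k-1}-R_k$ for $k=\overline{m+1,n}$, where $R_k:=\sum_{j=1}^{m+1}\int_0^\infty f_{k,j}y^{[j-1]}z^{(n-k)}\,dx$. Unfolding down to $B_m$ yields $B_n=(-1)^{m+\tau}B_m+\sum_{k=m+1}^n(-1)^{n-k+1}R_k$. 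Since $z^{(n-m)}=z^{(m+\tau)}$, substituting the explicit $y^{[m]}$ shows that $(-1)^{m+\tau}B_m$ reproduces the principal term $(-1)^m(y^{(m+\tau)},z^{(m)})$ of \eqref{quad} --- in the odd case after one further integration by parts --- together with, when $\tau=0$, the $j=m$ column $\sum_{r=0}^{m-1}(q_{r,m}y^{(r)},z^{(m)})$.

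It then remains to insert \eqref{defF} into each $R_k$ and, via the reindexing $s=n-k$, to identify $\sum_{k=m+1}^n(-1)^{n-k+1}R_k$ with the remaining entries of $\sum_{r,j}(q_{r,j}y^{(r)},z^{(j)})$. The odd case is routine: with $f_{k,j}=(-1)^k q_{j-1,2m+1-k}$ and every $y^{[j-1]}$ an ordinary derivative, the signs collapse to $(-1)^{n+1}=1$ and the reindexing recovers all of $\sum_{r,j=0}^m(q_{r,j}y^{(r)},z^{(j)})$ at once. I expect the even case to be the main obstacle: there the summand $f_{k,m+1}y^{[m]}$ feeds the $q$-part of $y^{[m]}$ back into $R_k$, producing quadratic terms proportional to $q_{r,m}q_{m,2m-k}$ that must be annihilated by the correction $(-1)^{m+k}q_{j-1,m}q_{m,2m-k}$ built into $f_{k,j}$ in \eqref{defF}; the cancellation rests on the opposite signs $(-1)^{m+k}+(-1)^{m+k+1}=0$ and is precisely what the definition of $\mathscr S_n$ is engineered to provide, with $q_{m,m}=0$ from \eqref{invS} completing the $j=m$ column. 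A subsidiary point is the local integrability of the intermediate products when $n=2m$, which holds because the two $L_{2,loc}$ factors $f_{k,m+1}$ and $q_{r,m}$ pair into an $L_{1,loc}$ product. After the cancellation $R_k=(-1)^{k+1}\sum_{r=0}^m\int_0^\infty q_{r,2m-k}y^{(r)}z^{(n-k)}\,dx$, and the reindexing delivers the columns $j=\overline{0,m-1}$. Assembling all contributions gives $B_n=(-1)^m(y^{(m+\tau)},z^{(m)})+\sum_{r,j=0}^m(q_{r,j}y^{(r)},z^{(j)})=(\ell_n(y),z)$ by \eqref{quad}; as both members lie in $L_{1,loc}(\mathbb R_+)$, we conclude $\ell_n(y)=y^{[n]}$.
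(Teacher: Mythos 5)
Your proposal is correct and follows essentially the same route as the paper's proof: establish $y \in W_{2,loc}^m(\mathbb R_+)$ (resp.\ $W_{1,loc}^{m+\tau}$) so that Lemma~\ref{lem:quad} applies, use condition (iii) to truncate the recursion \eqref{quasi}, telescope by integration by parts against $z \in \mathfrak D$, and match the result with the quadratic form \eqref{quad} using $q_{m,m}=0$, with the $L_{1,loc}$ claim read off from the recursion itself. The only cosmetic difference is that the paper performs the even-case cancellation pointwise, by substituting the inverse formulas \eqref{invS} into the recursion so that $f_{k,j}-f_{k,m+1}f_{m,j}=(-1)^{k+1}q_{j-1,2m-k}$ appears before integrating, whereas you expand \eqref{defF} inside the integrals $R_k$ and cancel there --- the same algebra in a different order.
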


\begin{proof}
For definiteness, consider $n = 2m$. The proof for $n = 2m+1$ is analogous and even easier. Since $F \in \mathfrak F_{n,loc}$, then the assumption (iii) holds, which together with \eqref{quasi} imply 
\begin{equation} \label{yj}
y^{[j]} = y^{(j)}, \quad j = \overline{0,m-1}. 
\end{equation}
Therefore, $y \in \mathcal D_F$ implies $y \in W_{1,loc}^m(\mathbb R_+)$. Moreover, 
\begin{equation} \label{ym}
y^{(m)} = y^{[m]} + \sum_{j = 1}^m f_{m,j} y^{(j-1)} \in L_{2,loc}(\mathbb R_+),
\end{equation}
so $y \in W_{2,loc}^m(\mathbb R_+)$. Thus, $y$ satisfies the conditions of Lemma~\ref{lem:quad}. Hence $\ell_n(y) \in \mathfrak D'$ and \eqref{quad} holds.

Using \eqref{quasi} and \eqref{yj}, we obtain
\begin{equation} \label{yk}
y^{[k]} = (y^{[k-1]})' - f_{k,m+1} y^{(m)} - \sum_{j = 1}^m (f_{k,j} - f_{k,m+1} f_{m,j}) y^{(j-1)}, \quad k = \overline{m+1, 2m}.
\end{equation}
Substituting \eqref{invS} into \eqref{yk}, we derive
$$
y^{[k]} = (y^{[k-1]})' + (-1)^k \sum_{j = 0}^m q_{j,2m-k} y^{(j)}, \quad k = \overline{m+1, 2m}.
$$
Using the relation
$$
( y^{[k]}, z ) = -( y^{[k-1]}, z' ) + (-1)^k \sum_{j = 0}^m ( q_{j,2m-k} y^{(j)}, z ), \quad z \in \mathfrak D,
$$
recursively for $k = n, n-1, \ldots, m+1$, we conclude that
\begin{equation} \label{sm1}
( y^{[n]}, z ) = (-1)^m ( y^{[m]}, z^{(m)} ) + \sum_{j = 0}^m \sum_{s = 0}^{m-1} ( q_{j,s} y^{(j)}, z^{(s)}).
\end{equation}
It follows from \eqref{ym} and \eqref{invS} that
\begin{equation} \label{sm2}
y^{[m]} = y^{(m)} + (-1)^m \sum_{j = 0}^{m-1} q_{j,m} y^{(j)}.
\end{equation}
Note that, in view of the definition in Lemma~\ref{lem:quad}, we have $q_{m,m} = 0$. Therefore, combining \eqref{sm1}, \eqref{sm2} and comparing the result with \eqref{quad}, we get 
$$
( \ell_n(y), z ) = ( y^{[n]}, z ), \quad z \in \mathfrak D. 
$$
Hence, $\ell_n(y) = y^{[n]}$ in $\mathfrak D'$.

On the other hand, taking \eqref{yk} for $k = n$ and \eqref{ym} into account, we conclude that $y^{[n]} \in L_{1,loc}(\mathbb R_+)$ for $y \in \mathcal D_F$. Thus, $\ell_n(y)$ is also a regular function, which completes the proof.
\end{proof}

For investigation of inverse spectral problems, we need the following technical lemma, which generalizes \cite[Lemma~2.1]{Bond21} and transfers it to the half-line case.

\begin{lem} \label{lem:PF-half}
Suppose that $\Sigma, \tilde \Sigma \in \Sigma_I$, $F = \mathscr F_I(\Sigma)$, $\tilde F = \mathscr F_I(\tilde \Sigma)$, and a unit lower-triangular matrix function $P(x) = [p_{k,j}(x)]_{k,j = 1}^n$ satisfies the equation
\begin{equation} \label{PF}
P'(x) + P(x) \tilde F(x) = F(x) P(x), \quad x \in \mathbb R_+.
\end{equation}

Then $P(x)$ on $\mathbb R_+$ identically equals the $(n \times n)$ unit matrix $I_n$ and $\Sigma = \tilde \Sigma$, that is, $\sigma_{\nu}(x) = \tilde \sigma_{\nu}(x)$ a.e. on $\mathbb R_+$, $\nu = \overline{0, n-2}$.
\end{lem}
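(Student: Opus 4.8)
The plan is to read the matrix identity \eqref{PF} entry by entry and to exploit the sparsity of $F$ and $\tilde F$ imposed by (i)--(iii) in order to localize the deviation $P - I_n$. First I observe that, since $F, \tilde F \in \mathfrak F_n \subset L_{1,loc}(\mathbb R_+)$ and \eqref{PF} is a linear first-order system for the entries of $P$, the matrix $P$ is automatically in $AC_{loc}(\mathbb R_+)$, so all the manipulations below are legitimate a.e. In the $(k,j)$-entry of \eqref{PF}, the unit superdiagonal $f_{k,k+1} = \tilde f_{k,k+1} = 1$ produces the shift terms $p_{k+1,j}$ and $p_{k,j-1}$, while all the genuine coefficient entries of $F$ and $\tilde F$ are confined, by (iii), to rows $\ge m$ and columns $\le m+1$.

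The first main step is a double induction that confines $P - I_n$ to a small block. Because rows $1, \ldots, m-1+\tau$ of $F$ and $\tilde F$ reduce, by (i)--(iii), to a single nonzero entry $f_{k,k+1} = 1$, the entry equation for such rows collapses to $p'_{k,j} = p_{k+1,j} - p_{k,j-1}$. Building rows top-down from the known first row $p_{1,j} = \de_{1,j}$, I get by induction that the first $m+\tau$ rows of $P$ coincide with those of $I_n$. Symmetrically, columns $m+2, \ldots, n$ of $\tilde F$ (and $F$) reduce to a single shift entry, and building columns right-to-left from $p_{k,n} = \de_{k,n}$ shows that columns $m+1, \ldots, n$ of $P$ coincide with those of $I_n$. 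Hence $P - I_n$ is supported in the bottom-left block $R = [p_{k,j}]$, $k = \overline{m+\tau+1, n}$, $j = \overline{1,m}$. Moreover, the two ``boundary'' equations (row $m+\tau$ and column $m+1$ of \eqref{PF}) express the top row and the right column of $R$ algebraically through the corresponding entries of $\tilde F - F$; in particular $P \equiv I_n$ if and only if $R \equiv 0$, and $R \equiv 0$ forces $F = \tilde F$.

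The crux is to prove $R \equiv 0$. The entry equations with $m+\tau+1 \le k \le n$, $1 \le j \le m$ form a closed coupled first-order linear system for the entries of $R$, whose inhomogeneity and variable coefficients are built from the differences $f_{k,j} - \tilde f_{k,j}$ and from $f_{k,l}, \tilde f_{l,j} \in L_1(\mathbb R_+)$. This is exactly the point at which the global integrability $\Sigma, \tilde\Sigma \in \Sigma_I$ (equivalently $F, \tilde F \in \mathfrak F_n$), rather than mere local integrability, is indispensable, and it is the essential difference from the finite-interval argument of \cite[Lemma~2.1]{Bond21}, where a normalization at a finite endpoint is available. The idea is to integrate this cascade starting from $+\infty$: since the coefficient matrix of the system lies in $L_1(\mathbb R_+)$, the associated transition matrix has a finite invertible limit as $x \to \infty$, so that a solution tending to $0$ at infinity must vanish identically. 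Controlling the behaviour of $R$ at infinity simultaneously with that of the coefficient differences $f_{k,j} - \tilde f_{k,j}$, through the $L_1$- and $L_2$-membership of the entries of $F$ and $\tilde F$ and a Gronwall-type estimate on $[T, \infty)$, I expect to be the main obstacle; this asymptotic input is what replaces the endpoint data used on a compact interval. (As a consistency check, once $F = \tilde F$ is known the vanishing $R \equiv 0$ also follows directly by the same top-down row induction, since the cross terms then cancel row by row.)

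Once $R \equiv 0$ is established, the preceding step gives $P \equiv I_n$ and $F = \tilde F$ on $\mathbb R_+$. It then remains to pass from $F = \tilde F$ to $\Sigma = \tilde\Sigma$, which is immediate from the construction: the map $\mathscr F_I = \mathscr S_n \circ \mathscr Q_I$ is injective, since $\mathscr S_n$ is invertible with inverse given by \eqref{invS}, and $\mathscr Q_I$ is injective because, by \eqref{defQ}--\eqref{defchi}, the matrices $\chi_{\nu, i_\nu}$ are linearly independent, so the decomposition $Q = \sum_\nu \sigma_\nu \chi_{\nu, i_\nu}$ recovers each $\sigma_\nu$ uniquely. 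Hence $\sigma_\nu = \tilde\sigma_\nu$ a.e. on $\mathbb R_+$ for all $\nu = \overline{0,n-2}$, which completes the proof.
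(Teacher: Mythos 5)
Your Step 1 (confining $P - I_n$ to the bottom-left block and expressing its boundary row and column through $\tilde F - F$) and your final step (injectivity of $\mathscr F_I = \mathscr S_n \circ \mathscr Q_I$, via invertibility of $\mathscr S_n$ and linear independence of the matrices $\chi_{\nu, i_\nu}$) are both correct and agree with what the paper does. The genuine gap is the crux step, the proof that $R \equiv 0$, and it is not a technicality you could fill in along the lines you propose. First, your key claim that ``the coefficient matrix of the system lies in $L_1(\mathbb R_+)$'' is false: the closed system for the block entries necessarily contains the shift terms $p_{k+1,j}$ and $p_{k,j-1}$ generated by the superdiagonal units $f_{k,k+1} = \tilde f_{k,k+1} = 1$, and these enter with constant coefficient $1$. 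Hence the homogeneous part is governed by a constant nilpotent matrix, its transition matrix grows polynomially and has no finite limit as $x \to \infty$, and the homogeneous system possesses nontrivial polynomial solutions. Second, the system is not homogeneous in $R$ at all: its inhomogeneity consists of the unknown differences $f_{k,j} - \tilde f_{k,j}$, which are precisely what one must prove to vanish, so ``a solution tending to $0$ at infinity vanishes identically'' is not applicable even if you could establish decay of $R$ (which you admit you cannot).

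More fundamentally, no argument that treats these differences as generic integrable data can succeed, because the lemma is \emph{false} for arbitrary $F, \tilde F \in \mathfrak F_n$ satisfying all the sparsity and integrability conditions: this is exactly the content of Remark~\ref{rem:arbF} and Example~\ref{ex:1}, where for $n = 2$ one gets $P \not\equiv I_2$ with $p_{2,1} \in L_1(\mathbb R_+)$, which may even decay at infinity. Any proof must exploit the specific structure of the image of $\mathscr F_I$, namely that $\hat Q = \mathscr S_n^{-1}(F) - \mathscr S_n^{-1}(\tilde F)$ decomposes as $\sum_\nu \hat \sigma_\nu \chi_{\nu, i_\nu}$ with each $\chi_{2k,i}$ symmetric, each $\chi_{2k+1,i}$ antisymmetric, and each supported on a single anti-diagonal. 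This is what the paper's proof does: after passing to the variables $r_{l,s}$ and the boundary conditions \eqref{bcr}, it runs an induction on $K$ in which the symmetric and antisymmetric combinations $r_s^{\pm}$ satisfy the scalar cascades \eqref{sysr} whose right-hand sides vanish except at one index, where they equal $\hat\sigma_{2K}$ (resp. $\hat\sigma_{2K+1}$); solving the cascades explicitly shows these functions are polynomials, and the half-line hypothesis enters only through the elementary fact that a polynomial coinciding with an $L_1(\mathbb R_+)$-function must vanish identically. In particular, the vanishing of the $r$'s and of the $\hat\sigma_\nu$'s is obtained simultaneously, level by level, rather than by first killing $R$ and then reading off $F = \tilde F$ as your outline intends; no Gronwall or transition-matrix argument, and no decay of $R$ at infinity, appears anywhere.
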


\begin{proof}
\textsc{Step 1.} For definiteness, consider $n = 2m$. The case $n = 2m+1$ is analogous and even easier. Using the first $(m-1)$ rows of \eqref{PF}, we get $p_{k,j} = 0$ for $k = \overline{1, m}$, $j < k$. The $m$-th row of \eqref{PF} implies
\begin{equation} \label{pf1}
p_{m+1,j} = \tilde f_{m,j} - f_{m,j}, \quad j = \overline{1, m}.
\end{equation}
Similarly, considering the columns of \eqref{PF} for $j = 2m, 2m-1, \ldots, m+1$, we get $p_{k,j} = 0$ for $k > j$, $j = \overline{m+1, 2m}$, and
\begin{equation} \label{pf2}
p_{k,m} = f_{k,m+1} - \tilde f_{k,m+1}, \quad k = \overline{m+1,2m}.
\end{equation}
For $k = \overline{m+1,2m}$, $j = \overline{1, m}$, equation \eqref{PF} yields
\begin{equation} \label{pf3}
p'_{k,j} + p_{k,j-1} + p_{k,m} \tilde f_{m,j} + \tilde f_{k,j} = f_{k,j} + f_{k,m+1} p_{m+1, j} + p_{k+1, j}.
\end{equation}
Here, we assume that $p_{k,j} = 0$ if $j < 1$ or $k > 2m$.
Substituting \eqref{pf1} and \eqref{pf2} into \eqref{pf3}, we get
\begin{equation} \label{pf4}
p'_{k,j} + p_{k,j-1} + (\tilde f_{k,j} - \tilde f_{k,m+1} \tilde f_{m,j}) = p_{k+1,j} + (f_{k,j} - f_{k,m+1} f_{m,j}), \quad k = \overline{m+1, 2m}, \: j = \overline{1, m}.
\end{equation}
Using \eqref{invS}, pass to the new variables $[q_{l,s}]_{l,s = 0}^m := \mathscr S_n^{-1}(F)$, $[\tilde q_{l,s}]_{l,s = 0}^m := \mathscr S_n^{-1}(\tilde F)$ and $r_{j-1, 2m-k} := (-1)^{k+1} p_{k,j}$, $k = \overline{m,2m}$, $j = \overline{1, m+1}$. Thus, we get the system
\begin{gather} \label{eqr}
    r'_{l,s} + r_{l-1,s} + r_{l,s-1} = q_{l,s} - \tilde q_{l,s}, \quad l,s = \overline{0, m}, \quad (l,s) \ne (m,m), \\ \label{bcr}
    r_{m,s} = r_{s,m} = 0, \quad s = \overline{0, m-1}.
\end{gather}
Note that it is unimportant whether $r_{m,m-1} = r_{m-1,m} = 0$ or $r_{m,m-1} = r_{m-1,m} = 1$, since these values do not influence on the other entries.

In the case of odd $n$, we obtain exactly the same system \eqref{eqr}-\eqref{bcr} with respect to $r_{j-1,2m+1-k} := (-1)^k p_{k,j}$, $k = \overline{m+1, 2m+1}$, $j = \overline{1, m+1}$.

\smallskip

\textsc{Step 2.} 
Denote $\hat q_{l,s} := q_{l,s} - \tilde q_{l,s}$, $\hat \sigma_{\nu} = \sigma_{\nu} - \tilde \sigma_{\nu}$. 
It remains to prove that the relations \eqref{eqr}, \eqref{bcr} imply $r_{l,s}(x) = 0$ for $l, s = \overline{0, m}$, $(l,s) \ne (m,m)$, and $\hat \sigma_{\nu} = 0$, $\nu = \overline{0, n-2}$.
Let us show this by induction.
Suppose that we have already proved
$\hat \sigma_{2k} = 0$, $\hat \sigma_{2k+1} = 0$ for $k = \overline{0,K-1}$ with some fixed $K \in \{ 0, \ldots, m-1 \}$. This implies $\hat q_{k,s} = \hat q_{s,k} = 0$ for $s = \overline{0, m}$, $k = \overline{0, K-1}$. Therefore, it follows from \eqref{eqr}, \eqref{bcr} that $r_{k,s} = r_{s,k} = 0$ for $s = \overline{0, m}$, $k = \overline{0, K-1}$. Denote
$$
r_s^{\pm} := \frac{1}{2}(r_{K,s} \pm r_{s,K}), \quad \hat q_s^{\pm} := \frac{1}{2} (\hat q_{K,s} \pm \hat q_{s,K}).
$$
From \eqref{eqr},\eqref{bcr}, we get the systems
\begin{equation} \label{sysr}
(r_s^{\pm})' + r_{s-1}^{\pm} = \hat q_s^{\pm}, \quad s = \overline{0, m}, \quad r_m^{\pm} = 0. 
\end{equation}
By virtue of \eqref{defQ}, \eqref{defchi}, we have
$$
\hat q_s^+ = \left\{\begin{array}{ll}
                \hat \sigma_{2K}, & \: s = i_{2K} + K, \\
                0, & \: \text{otherwise},
            \end{array} \right.
\qquad            
\hat q_s^- = \left\{\begin{array}{ll}
                \hat \sigma_{2K+1}, & \: s = i_{2K+1} + K + 1, \\
                0, & \: \text{otherwise}.
            \end{array} \right.
$$
Therefore, considering \eqref{sysr} with ``$+$'' for $s = m, m-1, \ldots, i_{2K}+K+1$, we get $r_{s-1}^+ = 0$. Then, $r_{K+i_{2K}-1}^+ = \hat \sigma_{2K}$. If $i_{2K} = 0$, this immediately yields $\hat \sigma_{2K} = 0$. Otherwise, solving \eqref{sysr} for $s = K, K+1, \ldots, K+i_{2K}-1$, we obtain 
$$
r_{K+j}^+(x) = \sum_{l = 0}^j c_{j-l} \frac{(-1)^l x^l}{l!}, \quad j = \overline{0, i_{2K}-1},
$$
where $\{ c_l \}_{l = 0}^{i_{2K}-1}$ are arbitrary constants. Since $\sigma_{2K}$, $\tilde \sigma_{2K} \in L_1(\mathbb R_+)$, then $r_{K+i_{2K}-1}^+ \in L_1(\mathbb R_+)$, so $c_l = 0$, $l = \overline{0, i_{2K}-1}$. Hence, $r_s^+ = 0$ for $s = \overline{K, K+i_{2K}-1}$ and $\hat \sigma_{2K} = 0$.
Analogously, we show that $\hat \sigma_{2K+1} = 0$ by using the system \eqref{sysr} with ``$-$''. Note that, in the case $n = 2m$ and $K = m-1$, the expression \eqref{defl} does not contain the coefficient $\sigma_{2K+1}$, so the last step should be omitted.

Returning to the variables $p_{k,j}$, we arrive at the assertion of the lemma.
\end{proof}

In the case $\Sigma, \tilde \Sigma \in \Sigma_{I, (0,1)}$, 
$$
\Sigma_{I,(0,1)} := \left\{ (\sigma_{\nu})_{\nu = 0}^{n-2} \colon
\begin{array}{l}
\sigma_{\nu} \in L_1(0,1), \quad \nu = \overline{0, n-2}, \\
\sigma_{\nu} \in L_2(0,1), \quad \nu \in K(I)
\end{array}
\right\},
$$
the assertion of Lemma~\ref{lem:PF-half} is valid under additional initial conditions on $P(x)$. The following lemma generalizes and improves~\cite[Lemma~2.1]{Bond21}, and can be used for studying inverse spectral problems on a finite interval. 

\begin{lem} \label{lem:PF-finite}
Suppose that $\mathcal N \subseteq \{ 0, 1, \ldots, n-2 \}$, $\Sigma, \tilde \Sigma \in \Sigma_{I,(0,1)}$, $\sigma_{\nu}(x) = \tilde \sigma_{\nu}(x)$ a.e. on $(0,1)$ for $\nu \in \mathcal N$, $F = \mathscr F_I(\Sigma)$, $\tilde F = \mathscr F_I(\tilde \Sigma)$, and a unit lower-triangular matrix function $P(x) = [p_{k,j}(x)]_{k,j = 1}^n$ satisfies the equation \eqref{PF} on $(0,1)$ and the initial conditions $IC(\nu)$, $\nu = \overline{0, n-2} \setminus \mathcal N$, where
\begin{equation}
\label{icP}
\left.
\begin{array}{ll}
IC(2k) \colon &
p_{n-s,k+1}(0) + p_{n-k,s+1}(0) = 0, \quad
s = \overline{k, k+ i_{2k}-1}, \quad k = \overline{0, m-1} \\
IC(2k+1) \colon & p_{n-s,k+1}(0) - p_{n-k,s+1}(0) = 0, \quad s = \overline{k+1, k+i_{2k+1}}, \quad k = \overline{0, m + \tau - 2}
\end{array}
\right\}
\end{equation}

Then $P(x) \equiv I_n$ and $\Sigma = \tilde \Sigma$.
\end{lem}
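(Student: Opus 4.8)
The plan is to reduce everything to the machinery already assembled for Lemma~\ref{lem:PF-half} and to replace the one place where that argument used integrability over the whole half-line by an argument based on the initial conditions \eqref{icP}. First I would carry out Step~1 of the proof of Lemma~\ref{lem:PF-half} essentially verbatim: that step only manipulates the matrix identity \eqref{PF} algebraically and uses the explicit inverse $\mathscr S_n^{-1}$, so it is insensitive to whether we work on $\mathbb R_+$ or on $(0,1)$. It yields, after passing to the variables $r_{l,s}=(-1)^{\,\cdots}p_{k,j}$ and symmetrizing $r_s^{\pm}=\tfrac12(r_{K,s}\pm r_{s,K})$, the same decoupled chains \eqref{sysr}, together with the identification of the right-hand sides $\hat q_s^{\pm}$ with $\hat\sigma_{2K}$ (resp.\ $\hat\sigma_{2K+1}$) at a single active index. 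For definiteness I treat $n=2m$; the case $n=2m+1$ is identical, as in Lemma~\ref{lem:PF-half}.

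Then I would run the induction of Step~2 on $K=0,1,\dots,m-1$, but split each stage according to whether the relevant coefficient lies in $\mathcal N$. If $2K\in\mathcal N$, then $\hat\sigma_{2K}=0$ is given, so $\hat q_s^{+}\equiv 0$, and the ``$+$'' chain in \eqref{sysr}, started from $r_m^{+}=0$ and run downward, forces $r_s^{+}\equiv0$ for every $s$ with no further input; the case $2K+1\in\mathcal N$ is the same with $r_s^{-}$. If instead $\nu\notin\mathcal N$, I solve the chain exactly as in Lemma~\ref{lem:PF-half}: the top of the chain gives $r_s^{\pm}=0$ above the active index and then exhibits $\hat\sigma_{\nu}$ as the polynomial $\sum_l c_{\cdot}\tfrac{(-1)^l x^l}{l!}$ whose coefficients are the integration constants $c_l$. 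On the half-line these were annihilated by $L_1(\mathbb R_+)$; here I would instead evaluate at $x=0$, where $c_l=r^{\pm}_{K+l}(0)$, and invoke $IC(\nu)$ to conclude $c_l=0$, hence $\hat\sigma_{\nu}=0$ and $r_s^{\pm}\equiv0$ on the whole active block. Feeding this back closes the induction step, and returning to the variables $p_{k,j}$ yields $P\equiv I_n$ and $\Sigma=\tilde\Sigma$.

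The crux, and the step I expect to cost the most care, is precisely the translation of \eqref{icP} into the vanishing of the constants $c_l$. The conditions \eqref{icP} are written in terms of $p_{k,j}(0)$, so one must rewrite them through the correspondence $p_{k,j}\leftrightarrow r_{l,s}\leftrightarrow r_s^{\pm}$. The choice of the ``$+$'' combination in $IC(2k)$ and the ``$-$'' combination in $IC(2k+1)$, together with the index ranges $s=\overline{k,k+i_{2k}-1}$ and $s=\overline{k+1,k+i_{2k+1}}$, is tailored so that $IC(2K)$ and $IC(2K+1)$ jointly supply exactly the equations $r^{\pm}_{K+l}(0)=0$, $l=\overline{0,i_{\nu}-1}$, needed to kill every $c_l$; here one also uses that the diagonal antisymmetric part $r_K^{-}$ vanishes identically. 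Checking that the signs line up and that the two ranges cover precisely the active indices is the bookkeeping that takes the place of the integrability argument of Lemma~\ref{lem:PF-half}; once it is verified, the induction proceeds as before and the lemma follows.
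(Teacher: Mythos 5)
Your proposal is correct and follows essentially the same route as the paper: the paper's proof also reuses Step~1 of Lemma~\ref{lem:PF-half} verbatim, rewrites \eqref{icP} as initial conditions $r_s^{\pm}(0)=0$ on the symmetrized variables, and uses these (in place of the $L_1(\mathbb R_+)$ integrability argument) to annihilate the integration constants in the polynomial solutions of \eqref{sysr}, treating $\mathcal N \ne \varnothing$ as a minor modification. Your explicit split of the induction step according to $\nu \in \mathcal N$ and your remark that $r_K^-$ vanishes identically (which explains the shifted range in $IC(2k+1)$) are exactly the "minor modifications" and bookkeeping the paper leaves implicit.
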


\begin{proof}
Lemma~\ref{lem:PF-finite} is proved analogously to Lemma~\ref{lem:PF-half}. Step~1 requires no modifications. For simplicity, assume that $\mathcal N = \varnothing$. 
At Step~2, we consider the system~\eqref{eqr}-\eqref{bcr} together with the initial conditions 
\begin{equation} \label{icr}
\left.
\begin{array}{l}
    (r_{k,s} + r_{s,k})(0) = 0, \quad k = \overline{0, m-1}, \quad s = \overline{k, k + i_{2k} - 1} \\
    (r_{k,s} - r_{s,k})(0) = 0, \quad k = \overline{0, m + \tau - 2}, \quad s = \overline{k + 1, k + i_{2k+1}}
\end{array}\right\}
\end{equation}
which are equivalent to \eqref{icP}. Further, solving \eqref{sysr} with ``$+$'' for $s = K, K + 1, \ldots, K + i_{2K}-1$, we use the initial conditions $r_s^+(0) = 0$, which follow from \eqref{icr}. Therefore, we get $r_s^+ = 0$ for $s = \overline{K, K + i_{2K}-1}$, so $\hat \sigma_{2K} = 0$. The equality $\hat \sigma_{2K+1} = 0$ is proved analogously. Obviously, the proof is valid in the case $\mathcal N \ne \varnothing$ with minor modifications.
\end{proof}

\section{Birkhoff solutions} \label{sec:birk}

Suppose that $I \in \mathcal I_n$, $\Sigma \in \Sigma_I$, and $F = \mathscr F_I(\Sigma)$. In view of Theorem~\ref{thm:quasi}, we understand the solution of the equation \eqref{eqv}:
\begin{equation*} 
\ell_n(y) = \la y, \quad x \in \mathbb R_+,
\end{equation*}
in the following sense.

\begin{df} \label{def:sol}
A function $y$ is a \textit{solution} of equation \eqref{eqv} if $y \in \mathcal D_F$ and $\vec y$ satisfies the system \eqref{sys}:
$$
{\vec y}\,' = (F(x) + \Lambda) \vec y, \quad x \in \mathbb R_+.
$$
\end{df}

In this section, we obtain the Birkhoff solutions with the known behavior as $\rho \to \infty$ of equation \eqref{eqv} with $\la = \rho^n$.
Consider the partition of the $\rho$-plane into the sectors 
\begin{equation} \label{defGa}
\Gamma_k = \left\{ \rho \colon \frac{\pi(k-1)}{n} < \arg \rho < \frac{\pi k}{n} \right\}, \quad k = \overline{1, 2n}.
\end{equation}
Below, we assume that, if $\rho$ lies in a fixed sector $\Gamma = \Gamma_k$, then the roots $\{ \om_j \}_{j = 1}^n$ of the equation $\omega^n = 1$ are numbered so that
\begin{equation} \label{order}
\mbox{Re} \, (\rho \om_1) < \mbox{Re} \, (\rho \om_2) < \dots < \mbox{Re} \, (\rho \om_n), \quad \rho \in \Gamma.
\end{equation}

Put $\Omega := [\om_k^{j-1}]_{j,k = 1}^n$, $\la = \rho^n$. Applying the change of variables 
$$
\vec y(x) = \diag \{ 1, \rho, \ldots, \rho^{n-1} \} \Omega v(x) 
$$
(see \cite{SS20, Bond21} for details), we reduce the system \eqref{sys} to the form
\begin{gather} \label{ABD}
v' = \rho B v + A(x) v + D(x, \rho) v, \quad x \in \mathbb R_+, \\ \nonumber
B := \diag \{ \om_1, \om_2, \ldots, \om_n \}, \quad D(x, \rho) = \sum_{k = 1}^{n-1} \rho^{-k} D_k(x), 
\end{gather}
where $A(x)$ and $D_k(x)$, $k = \overline{1, n-1}$, are $(n \times n)$ matrix functions with entries of the classes $(L_2 \cap L_1)(\mathbb R_+)$ and $L_1(\mathbb R_+)$, respectively.

The Birkhoff solutions of differential systems generalizing \eqref{ABD} on a finite interval have been constructed in \cite{Rykh99, SS20, Yur-posobie}. Savchuk and Shkalikov \cite{SS20} have used those results to obtain the Birkhoff fundamental systems of solutions (FSS) of even-order differential equations with distribution coefficients. For the case of the half-line, their proofs are also valid with necessary modifications. However, the finite interval and the half-line cases have some important differences. In order to study inverse spectral problems for higher-order operators or differential systems on a finite interval, it is sufficient to have the Birkhoff solutions analytic for $\rho \in \Gamma$, $|\rho| > \rho_*$  with some fixed $\rho_* > 0$. For the half-line case, Yurko \cite{Yur02} has used the family of the Birkhoff FSS analytic for $\rho \in \Gamma$, $|\rho| > \rho_{\al}$ and depending on the parameter $\al \ge 0$, where $\rho_{\al} \to 0$ as $\al \to \infty$. Such FSS allowed him to study the properties of the spectral characteristics in the neighborhood of $\la = 0$ for the higher-order differential operators and the first-order differential systems. The construction of such Birkhoff systems for the case of regular coefficients is described, e.g., in \cite{Yur-posobie}. Developing the methods of \cite{Yur-posobie} for the system \eqref{ABD}, we have proved the following theorem.

\begin{thm} \label{thm:BirkY1}
For every $\alpha \ge 0$, there exists a FSS $\{ Y_{k,\al}(x, \rho) \}_{k = 1}^n$ of \eqref{ABD} having the following properties:
\begin{enumerate}
\item $Y_{k,\alpha}(x, \rho)$ are continuous for $x \in [0, \infty)$, $\rho \in \bar \Gamma$, $|\rho| \ge \rho_{\alpha}$.
\item For each $x \in [0, \infty)$, $Y_{k,\alpha}(x, \rho)$ are analytic in $\rho \in \Gamma$, $|\rho| \ge \rho_{\alpha}$.
\item The asymptotic relations
\begin{equation} \label{asymptY}
   Y_{k,\alpha}(x, \rho) = \exp(\rho \om_k x) (e_k + o(1)), \quad |\rho| \to \infty,
\end{equation} 
hold uniformly with respect to $x \ge \alpha$, $\rho \in \bar \Gamma$, where $e_k$ is the $k$-th column of the unit matrix $I_n$.
\end{enumerate}
\end{thm}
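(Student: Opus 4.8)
The plan is to build $\{Y_{k,\alpha}\}_{k=1}^n$ first on the half-line $[\alpha,\infty)$ via integral equations, and then to extend the solutions back to $[0,\alpha]$ along the flow of \eqref{ABD}. For the half-line part I would use the substitution $Y_{k,\alpha}(x,\rho) = \exp(\rho\om_k x)\, z_k(x,\rho)$, which converts \eqref{ABD} into
\[
z_k' = \rho(B - \om_k I_n) z_k + (A(x) + D(x,\rho)) z_k,
\]
so that the $j$-th diagonal entry of the principal part equals $\rho(\om_j - \om_k)$. The role of the normalization \eqref{order} is precisely that, for $\rho \in \Gamma$, one has $\mathrm{Re}\,(\rho(\om_j-\om_k)) < 0$ for $j < k$ and $\mathrm{Re}\,(\rho(\om_j-\om_k)) > 0$ for $j > k$. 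Accordingly I would pass to the integral equation
\[
z_k(x,\rho) = e_k + \int_{\alpha}^{\infty} G_k(x,t,\rho)\,\bigl(A(t) + D(t,\rho)\bigr) z_k(t,\rho)\, dt,
\]
where $G_k = \diag\{g_{k,1},\dots,g_{k,n}\}$ has entries $g_{k,j}(x,t,\rho) = \exp(\rho(\om_j-\om_k)(x-t))$ for $t \le x$, $j \le k$, and $g_{k,j}(x,t,\rho) = -\exp(\rho(\om_j-\om_k)(x-t))$ for $t \ge x$, $j > k$, and $g_{k,j} = 0$ otherwise. By \eqref{order} these kernels satisfy $|g_{k,j}(x,t,\rho)| \le 1$ on the whole region, which is the structural fact the argument rests on.

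Next I would solve this integral equation by successive approximations in the Banach space of bounded continuous $\mathbb C^n$-valued functions on $[\alpha,\infty)$ with the supremum norm, for each $\rho \in \bar\Gamma$ with $|\rho| \ge \rho_\alpha$. Since $|G_k| \le 1$ and the entries of $A$ and of the $D_j$ belong to $L_1(\mathbb R_+)$, the integral operator is bounded. I would split it into its $k$-th-row part, whose kernel $g_{k,k} \equiv 1$ (for $t \le x$) makes it a Volterra operator in $x$ whose resolvent is bounded for every finite value of $\int_\alpha^\infty|A|$, and its remaining rows, whose kernels are the genuinely exponential $g_{k,j}$ with $j \ne k$ and hence have small operator norm once $|\rho| \ge \rho_\alpha$; treating the latter as a perturbation of the former yields convergence of the scheme on the whole region, including the case $\alpha = 0$ where $\int_0^\infty|A|$ need not be small. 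The threshold $\rho_\alpha$ is chosen so that both the $D$-term (carrying the factor $|\rho|^{-j}$) and the off-diagonal exponential kernels are sufficiently small there. Continuity in $(x,\rho)$ and analyticity in $\rho \in \Gamma$ pass to the limit $z_k$: each iterate is continuous in $(x,\rho)$ and, as $g_{k,j}$ is entire in $\rho$ and $D(\cdot,\rho)$ is a Laurent polynomial in $\rho$, analytic in $\rho$; the convergence is uniform on the region, and Weierstrass's and Morera's theorems do the rest.

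For the asymptotics \eqref{asymptY}, I would read the leading term off the Neumann series, $z_k = e_k + (\text{remainder})$. The $D$-contribution to the remainder is $O(|\rho|^{-1}) = o(1)$. The $A$-contribution is handled through the off-diagonal kernels $g_{k,j}$, $j \ne k$: they concentrate near $t = x$ and tend to zero pointwise as $|\rho| \to \infty$, so that $\int_\alpha^\infty g_{k,j}(x,t,\rho) h(t)\,dt \to 0$ uniformly in $x$ for every fixed $h \in L_1(\mathbb R_+)$; together with the uniform bound on $z_k$ this yields $z_k(x,\rho) = e_k + o(1)$ uniformly for $x \ge \alpha$, $\rho \in \bar\Gamma$, which is \eqref{asymptY}. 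Finally I would extend each $Y_{k,\alpha}(\cdot,\rho)$ from $[\alpha,\infty)$ to $[0,\alpha]$ as the solution of the linear system \eqref{ABD} with its already-constructed value at $x=\alpha$ as initial data; the transition matrix of \eqref{ABD} is continuous in $(x,\rho)$ and analytic in $\rho$, so properties~1 and~2 persist on all of $[0,\infty)$. Since \eqref{asymptY} shows that $[Y_{1,\alpha},\dots,Y_{n,\alpha}]$ is close to $\exp(\rho B x)$ at, say, $x = \alpha$ for large $|\rho|$, its determinant is nonzero there, and hence $\{Y_{k,\alpha}\}_{k=1}^n$ is a fundamental system, understood in the sense of Definition~\ref{def:sol} via Theorem~\ref{thm:quasi}.

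The main obstacle is the term $A(x)$, which enters \eqref{ABD} with no negative power of $\rho$ and therefore, unlike the $D$-term, does not become small merely because $|\rho|$ is large. Consequently both the convergence of the scheme and the $o(1)$ bound in \eqref{asymptY} must be extracted from the Volterra (triangular) structure attached to the ordering \eqref{order} and from the concentration of the off-diagonal exponential kernels, used together with the mere $L_1$-integrability of $A$ over the unbounded half-line and \emph{uniformly} in $x \in [\alpha,\infty)$. This uniform control over an infinite interval is exactly what distinguishes the present half-line construction from the finite-interval results of \cite{Rykh99, SS20, Yur-posobie}, and is the point at which the method of \cite{Yur-posobie} has to be adapted.
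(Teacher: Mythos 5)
Your overall plan --- the substitution $Y_{k,\alpha}=e^{\rho\om_k x}z_k$, the system of integral equations on $[\alpha,\infty)$ whose integration limits ($\int_\alpha^x$ versus $-\int_x^\infty$) are dictated by the ordering \eqref{order}, successive approximations, and the backward extension to $[0,\alpha]$ --- is indeed the construction the paper has in mind (the paper gives no details and simply refers to the methods of \cite{Yur-posobie}, cf.\ \cite{Rykh99, SS20}). The genuine gap is in the step you yourself identify as the crux: convergence of the scheme, and the $o(1)$ bound, \emph{uniformly on the closed sector} $\bar\Gamma$, when $\alpha$ is small so that $\int_\alpha^\infty|A|\,dt$ is not small. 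Your mechanism is that the off-diagonal kernels $g_{k,j}$, $j\ne k$, ``have small operator norm once $|\rho|\ge\rho_\alpha$''. This is true only in closed subsectors strictly interior to $\Gamma$, where $|\mathrm{Re}\,(\rho(\om_j-\om_k))|\ge c_0|\rho|\to\infty$. It fails on and near the two boundary rays of $\Gamma$: by the remark after \eqref{defGk}, crossing a ray swaps some neighboring $\om_j,\om_l$, which means precisely that $\mathrm{Re}\,(\rho(\om_j-\om_l))=0$ on that ray. For such a pair the kernel has modulus identically $1$ (it oscillates with frequency $\asymp|\rho|$ instead of decaying), and the operator norm of the corresponding block on $C_b[\alpha,\infty)$ is $\sup_x\int_x^\infty|A_{j\,\cdot}|\,dt$, which does not become small as $|\rho|\to\infty$; an oscillating test function $h(t)=e^{\mathrm{i}\theta t}$ cancels the oscillation exactly, so no Riemann--Lebesgue effect survives at the level of operator norms. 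Consequently neither the perturbation-of-the-Volterra-resolvent argument nor your justification of \eqref{asymptY} (``the kernels concentrate near $t=x$ and tend to zero pointwise'' --- false on the rays) covers the boundary, while properties 1 and 3 of the theorem are asserted on $\bar\Gamma$ with uniformity, and your Weierstrass/Morera step also needs convergence up to the rays.

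Closing this gap requires an additional idea, and it is exactly the point where the hypothesis that the entries of $A$ lie in $(L_2\cap L_1)(\mathbb R_+)$ --- which your proposal never invokes, insisting instead on ``mere $L_1$-integrability'' --- enters the known proofs. Near the rays one must exploit oscillation rather than decay, and oscillation helps only when the kernel is integrated against \emph{fixed} functions, not against arbitrary elements of the unit ball of $C_b$: one has $\sup_x\bigl|\int_x^\infty e^{\rho(\om_j-\om_l)(x-t)}a(t)\,dt\bigr|\to0$ as $|\rho|\to\infty$, $\rho\in\bar\Gamma$, for each fixed $a\in L_1\cap L_2$ (Riemann--Lebesgue, made uniform in $x$ via the compactness of the translates $a(\cdot+x)$ in $L_1$). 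The standard ways to convert this into convergence of the scheme are either the analysis of \emph{iterated} kernels, in which every second integration is an integration of an oscillatory kernel against a fixed entry of $A$ and therefore contributes a small factor $\Upsilon(\rho)$ (this is the route of \cite{Rykh99, SS20}), or an approximation argument: split $A=A_1+A_2$ with $A_1$ smooth (so that integration by parts recovers a $1/\rho$ gain) and $\|A_2\|_{L_1\cap L_2}<\eps$, solve the $A_1$-system first, and set up the integral equation as a perturbation around that system rather than around $v'=\rho Bv$, so that the coefficient in the integral operator is the small $A_2$. Without some such device, your argument proves the theorem only with $\bar\Gamma$ replaced by proper closed subsectors of $\Gamma$, which is strictly weaker than the statement.
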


In Theorems~\ref{thm:BirkY1}-\ref{thm:Birkz}, it is supposed that  $\lim\limits_{\alpha \to \infty} \rho_{\alpha} = 0$.

Theorem~\ref{thm:BirkY1} and the change of variables 
$$
\vec y_{k,\al}(x) = \diag \{ 1, \rho, \ldots, \rho^{n-1} \} \Omega Y_{k,\al}(x), \quad k = \overline{1, n},
$$
readily imply the following result for equation~\eqref{eqv}.

\begin{thm} \label{thm:Birky1}
For every $\alpha \ge 0$, there exists FSS $\{ y_{k,\alpha}(x, \rho) \}_{k = 1}^n$ of equation \eqref{eqv} such that the quasi-derivatives $y_{k, \al}^{[j]}(x,\rho)$ for $k = \overline{1, n}$, $j = \overline{0, n-1}$ have the following properties:
\begin{enumerate}
\item $y_{k,\al}(x, \rho)$ are continuous for $x \in [0, \infty)$, $\rho \in \bar{\Gamma}$, $|\rho| \ge \rho_{\al}$.
\item For each $x \in [0, \infty)$, $y_{k,\al}^{[j]}$ are analytic in $\rho \in \Gamma$, $|\rho| \ge \rho_{\alpha}$;
\item The asymptotic relation
\begin{equation} \label{asympty}
y_{k,\al}^{[j]}(x, \rho) = (\rho \om_k)^j \exp(\rho \om_k x) (1 + o(1)), \quad |\rho| \to \iy, 
\end{equation}
holds uniformly with respect to $x \ge \al$ and $\rho \in \bar \Gamma$.
\end{enumerate}
\end{thm}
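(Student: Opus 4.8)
The plan is to obtain Theorem~\ref{thm:Birky1} as a direct corollary of Theorem~\ref{thm:BirkY1}, simply transporting the three listed properties of the FSS $\{Y_{k,\al}\}$ through the linear change of variables $\vec y = \diag\{1,\rho,\ldots,\rho^{n-1}\}\Omega v$. Setting $v = Y_{k,\al}$ and writing $\vec y_{k,\al} = \mbox{col}(y_{k,\al}^{[0]},\ldots,y_{k,\al}^{[n-1]})$ for the resulting vector, the identity $\vec y_{k,\al} = \diag\{1,\rho,\ldots,\rho^{n-1}\}\Omega Y_{k,\al}$ together with $\Omega = [\om_l^{j-1}]_{j,l=1}^n$ gives the componentwise formula
$$
y_{k,\al}^{[j]}(x,\rho) = \rho^j \sum_{l=1}^n \om_l^j (Y_{k,\al})_l(x,\rho), \qquad j = \overline{0, n-1}.
$$
Since the change of variables reduces \eqref{sys} to \eqref{ABD}, the vector $\vec y_{k,\al}$ solves \eqref{sys} whenever $Y_{k,\al}$ solves \eqref{ABD}; hence each first component $y_{k,\al} = y_{k,\al}^{[0]}$ is a solution of \eqref{eqv} in the sense of Definition~\ref{def:sol}, and the displayed formula identifies all of its quasi-derivatives.

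The first two properties then transfer at once. For fixed $x$, the coefficients $\rho^j \om_l^j$ are polynomials in $\rho$, so multiplying the continuous (resp. $\rho$-analytic) entries $(Y_{k,\al})_l$ by them and summing preserves continuity on $x \in [0,\iy)$, $\rho \in \bar\Gamma$, $|\rho| \ge \rho_\al$ (resp. analyticity in $\rho \in \Gamma$). The FSS property follows from invertibility of the transformation matrix: for $\rho \ne 0$ the diagonal factor is nonsingular and $\Omega$ is a Vandermonde matrix with distinct nodes $\{\om_l\}$, so $\diag\{1,\ldots,\rho^{n-1}\}\Omega$ is invertible; therefore the vectors $\{\vec y_{k,\al}\}_{k=1}^n$ span the solution space of \eqref{sys}, and the scalar functions $\{y_{k,\al}\}_{k=1}^n$ are linearly independent, because a relation $\sum_k c_k y_{k,\al} \equiv 0$ forces $\sum_k c_k y_{k,\al}^{[j]} \equiv 0$ for every $j$, i.e. $\sum_k c_k \vec y_{k,\al} \equiv 0$.

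For the asymptotics, I would substitute \eqref{asymptY}, that is $(Y_{k,\al})_l = \exp(\rho\om_k x)(\de_{l,k} + o(1))$, into the componentwise formula:
$$
y_{k,\al}^{[j]}(x,\rho) = \rho^j \exp(\rho\om_k x)\Bigl(\om_k^j + \sum_{l=1}^n \om_l^j\, o(1)\Bigr) = (\rho\om_k)^j \exp(\rho\om_k x)(1 + o(1)),
$$
which is precisely \eqref{asympty}. The only point requiring care is uniformity: since the moduli $|\om_l| = 1$ are bounded independently of $x$ and $\rho$, the finite sum of $o(1)$ remainders is again $o(1)$, and (after factoring out $\om_k^j$, whose reciprocal is likewise bounded) the uniformity with respect to $x \ge \al$ and $\rho \in \bar\Gamma$ is inherited verbatim from Theorem~\ref{thm:BirkY1}, as the multipliers $\om_l^j$ carry no further $x$- or $\rho$-dependence. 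This bookkeeping is the sole nontrivial ingredient; the remainder of the argument is a routine pushforward through the change of variables.
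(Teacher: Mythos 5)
Your proposal is correct and is exactly the paper's own argument: the paper derives Theorem~\ref{thm:Birky1} from Theorem~\ref{thm:BirkY1} via the same change of variables $\vec y_{k,\al} = \diag\{1,\rho,\ldots,\rho^{n-1}\}\Omega Y_{k,\al}$, merely stating that the result "readily" follows, while you spell out the componentwise transfer of continuity, analyticity, the FSS property, and the asymptotics \eqref{asympty}.
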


Fix $k \in \{ 1, \ldots, n-1\}$ and consider the region
\begin{equation} \label{defGk}
  G_k = \Bigl\{ \rho \in \mathbb C \colon \arg \rho \in \left((-1)^{n-k} - 1) \tfrac{\pi}{2n}, ((-1)^{n-k} + 3) \tfrac{\pi}{2n}\right) \Bigr\},
\end{equation}
being the union of two neighboring sectors $\Gamma$. Note that, while passing the boundary between two neighboring sectors, some neighboring values $\omega_j$ and $\omega_{j+1}$ are swapped in \eqref{order}.
The pair of sectors $G_k$ defined by \eqref{defGk} is chosen is such a way that $\om_k$ and $\om_{k+1}$ do not change their relative order, in other words, the sets $\{ \om_j \}_{j = 1}^k$ and $\{ \om_j \}_{j = k+1}^n$ are preserved in $G_k$.

Analogously to the system $B_{\alpha m}$ in Section~2.1.2 of \cite{Yur02}, we construct the following FSS.

\begin{thm} \label{thm:Birkz}
For every $\al \ge 0$ and $k \in \{ 1, \ldots, n-1 \}$, there exist solutions $\{ z_{s,k, \al} \}_{s = 1}^k$ of equation \eqref{eqv} with the quasi-derivatives $z_{s,k,\al}^{[j]}(x, \rho)$, $s = \overline{1, k}$, $j = \overline{0, n-1}$, having the following properties:
\begin{enumerate}
\item $z_{s,k,\alpha}^{[j]}(x, \rho)$ are continuous for $x \in [0, \infty)$, $\rho \in \bar G_k$, $|\rho| \ge \rho_{\alpha}$.
\item For each $x \in [0, \infty)$, $z_{s,k,\alpha}^{[j]}(x, \rho)$ are analytic in $\rho \in G_k$, $|\rho| \ge \rho_{\alpha}$.
\item The following uniform estimates hold:
$$
   z_{s,k,\al}^{[j]}(x, \rho) = O(\rho^j \exp(\rho \om_k x)), \quad x \ge \al, \quad \rho \in \bar G_k, \quad |\rho| \to \infty.
$$
\item In each of the two sectors $\Gamma \subset G_k$, the functions $\{ z_{1,k,\al}, \ldots, z_{k,k,\al}, y_{k+1,\al}, \ldots, y_{n,\al} \}$
form a FSS of equation \eqref{eqv}, where $y_{s,\al}$ are the solutions from Theorem~\ref{thm:Birky1}.
\end{enumerate}
\end{thm}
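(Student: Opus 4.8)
The plan is to carry out the construction at the level of the transformed system \eqref{ABD}, $v' = \rho B v + A(x) v + D(x,\rho) v$, and then to transfer the resulting matrix solutions back to equation \eqref{eqv} by the same change of variables $\vec z = \diag\{1, \rho, \ldots, \rho^{n-1}\} \Omega Z$ used in Theorem~\ref{thm:Birky1}; the quasi-derivative bounds in property 3 follow from this substitution once the corresponding bounds are established for the $Z$'s. The structural fact that makes everything work is the choice \eqref{defGk} of $G_k$: throughout $\overline{G_k}$ the roots split into the \emph{lower group} $\{\om_j\}_{j=1}^k$ and the \emph{upper group} $\{\om_j\}_{j=k+1}^n$ whose memberships are preserved, so that the sign of $\mathrm{Re}\,\rho(\om_i - \om_k)$ is constant on $\overline{G_k}$ for each $i$ --- nonpositive for $i \le k$ and strictly positive for $i \ge k+1$ --- even though the internal ordering inside each group may be reshuffled at the interior Stokes ray of $G_k$. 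First I would record this observation, since it is precisely what allows a single analytic object to live on the double sector.

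Following the construction of the system $B_{\alpha m}$ in \cite{Yur02}, for $s = \overline{1,k}$ I would seek $Z_{s,k,\al}$ as the solution of the integral equation
$$
Z_{s}(x,\rho) = e_s \exp(\rho \om_s x) + \int \mathcal K(x,t,\rho) \bigl( A(t) + D(t,\rho) \bigr) Z_{s}(t,\rho) \, dt,
$$
where $\mathcal K$ is the diagonal kernel whose $i$-th entry is $\exp(\rho \om_i (x-t))$, integrated over $\al \le t \le x$ when $\mathrm{Re}\,\rho(\om_i - \om_k) \le 0$ (the lower indices) and over $t \ge x$, with the opposite sign, when $\mathrm{Re}\,\rho(\om_i - \om_k) > 0$ (the upper indices). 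With these limits each entry is majorized by $\exp(\mathrm{Re}\,\rho\om_k\,(x-t))$ on its range, which produces the envelope $\exp(\rho\om_k x)$. Because the sign of $\mathrm{Re}\,\rho(\om_i - \om_k)$ is constant on $\overline{G_k}$, the rule defining $\mathcal K$ --- integrate every lower index from $\al$ and every upper index from $\infty$ --- is the same in both sectors and is insensitive to the internal reshuffling; hence $\mathcal K$, and with it $Z_s$, is analytic in $\rho \in G_k$. I would solve the equation by successive approximations, the Neumann series converging uniformly for $|\rho| \ge \rho_\al$ because $A$ and the $D_j$ lie in $L_1(\mathbb R_+)$, so the tail $\int_\al^\infty$ of their norms is small; exactly as in Theorems~\ref{thm:BirkY1} and \ref{thm:Birky1}, enlarging $\al$ shrinks this tail and hence $\rho_\al \to 0$. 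This yields properties 1--3.

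For property 4, in the sector where the free-term ordering is respected the iteration gives the refined asymptotics $Z_s \sim e_s \exp(\rho\om_s x)$, so $z_{1,k,\al}, \ldots, z_{k,k,\al}$ are linearly independent; adjoining the upper Birkhoff solutions $y_{k+1,\al}, \ldots, y_{n,\al}$ of Theorem~\ref{thm:Birky1}, whose leading exponentials $\om_{k+1}, \ldots, \om_n$ are separated from the lower group by the persistent gap, gives a nonvanishing Wronskian and therefore a FSS. Linear independence of a fixed family of solutions does not depend on the $\rho$-region, so it persists to the other sector $\Gamma \subset G_k$, where the same $y_{k+1,\al}, \ldots, y_{n,\al}$ (now renumbered) again complete the FSS, the lower/upper gap being preserved there as well. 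I expect the main obstacle to be exactly the gluing across the interior Stokes ray: one must guarantee that $Z_{s,k,\al}$ stays analytic and retains the $\om_k$-envelope while the root ordering reshuffles, and this is controlled entirely by the invariance of the two groups guaranteed by \eqref{defGk} together with the $L_1$-smallness of the tails that underlies both the uniform convergence of the Neumann series and the relation $\rho_\al \to 0$. The remaining work is the routine, if lengthy, verification of the estimates, which parallels the single-sector arguments behind Theorems~\ref{thm:BirkY1}--\ref{thm:Birky1}.
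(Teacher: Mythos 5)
Your overall route is the same as the paper's: the paper obtains these solutions exactly as Yurko's system $B_{\alpha m}$ in \cite{Yur02}, applied to the transformed system \eqref{ABD}, and then transfers them back to \eqref{eqv} by the change of variables, which is what you propose. However, the ``structural fact'' on which you hang the construction is false. The definition \eqref{defGk} guarantees only that the two \emph{sets} $\{\om_j\}_{j=1}^k$ and $\{\om_j\}_{j=k+1}^n$ are preserved in $G_k$; it does not prevent the root occupying position $k$ from swapping with another member of the lower group at the interior ray. Concretely, for $n=4$, $k=2$ one has $G_2=\{0<\arg\rho<\pi/2\}$, and when $\arg\rho$ crosses $\pi/4$ the positional pairs $(\om_1,\om_2)$ and $(\om_3,\om_4)$ both swap: the ordered roots change from $(-1,\mathrm{i},-\mathrm{i},1)$ to $(\mathrm{i},-1,1,-\mathrm{i})$. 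Thus the root realizing $\max_{j\le 2}\mathrm{Re}(\rho\om_j)$ changes from $\mathrm{i}$ to $-1$, and for fixed roots the sign of $\mathrm{Re}\,\rho(\om_1-\om_2)$ flips inside $G_2$, contradicting your claimed dichotomy. The kernel rule survives if the splitting is defined by group membership (which \emph{is} $\rho$-independent), and the envelope must then be $\exp\bigl(x\max_{j\le k}\mathrm{Re}(\rho\om_j)\bigr)$, i.e.\ $\exp(\rho\om_k x)$ only in the sector-wise numbering; but a consequence is that the componentwise asymptotics $Z_s\sim e_s\exp(\rho\om_s x)$ are not produced by this construction: the group-based integral equation controls the error only by a small multiple of the group envelope, which for $s<k$ (and, with your fixed labels, in the second sector even for $s=k$) dominates the free term.

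This breaks your proof of property 4 at both steps. The premise (refined asymptotics of the $Z_s$, hence a nonvanishing Wronskian) is unavailable, and the cross terms in the Wronskian coming from an error of envelope size in a column whose main term is $\exp(\rho\om_s x)$, $s<k$, are not small at a fixed $x>0$ as $|\rho|\to\infty$; moreover, the inference ``linear independence does not depend on the $\rho$-region, so it persists to the other sector'' is invalid, since for $\rho$ in the second sector the functions $z_{s,k,\al}(\cdot,\rho)$, $y_{j,\al}(\cdot,\rho)$ are different functions (and the $y_{j,\al}$ are themselves sector-dependent); analyticity of the $x$-independent Wronskian only excludes non-isolated zero sets. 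A correct argument must work in each sector separately, e.g.: a vanishing combination $\sum_s c_s Z_s\equiv 0$ forces the free terms $\sum_s c_s e_s\exp(\rho\om_s x)$ to vanish, so $c_s=0$; the growth bound $Z_s=O(\exp(x\max_{j\le k}\mathrm{Re}(\rho\om_j)))$ places each $Z_s$ in the $k$-dimensional span of the sector's lower Birkhoff solutions; a dimension count then yields the FSS property --- or one follows \cite{Yur02} via transition-matrix estimates. Finally, your convergence argument for the Neumann series (``the tail $\int_\al^\infty$ of the $L_1$ norms is small'') is correct only for large $\al$; the theorem is asserted for every $\al\ge 0$, and for $\al=0$ the quantity $\int_0^\infty\|A\|\,dt$ need not be small. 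Since the equation is genuinely of mixed Volterra type, for fixed small $\al$ the smallness must come from $|\rho|\to\infty$ (Riemann--Lebesgue-type decay of the oscillatory off-diagonal kernels combined with the Volterra structure of the non-oscillatory entries, and the factors $\rho^{-j}$ in $D$); tail smallness explains only why one may take $\rho_\al\to 0$ as $\al\to\infty$.
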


\section{Inverse problem on the half-line} \label{sec:ip}

Let $I \in \mathcal I_n$ and $\Sigma \in \Sigma_I$ be fixed. By using the matrix function $F = \mathscr F_I(\Sigma)$, define the quasi-derivatives $y^{[j]}$, $j = \overline{0, n}$, by \eqref{quasi}. Consider the boundary value problem $\mathcal L(\Sigma, U)$ given by \eqref{eqv}-\eqref{bc} and its Weyl matrix $M(\la) = [M_{s,k}(\la)]_{s,k = 1}^n$.
Using the Birkhoff systems constructed in Section~\ref{sec:birk}, we obtain the properties of the Weyl matrix, similar to the ones in the case of regular coefficients (see \cite[Theorem~2.1.1]{Yur02}).

\begin{thm} \label{thm:Weyl}
For each index pair $(s,k)$: $1 \le k < s \le n$, the Weyl function $M_{s,k}(\la)$ is analytic in $\Pi_{(-1)^{n-k}} := \mathbb C \setminus \{ \la \colon (-1)^{n-k} \la \ge 0 \}$ except for an at most countable bounded set of poles. For $(-1)^{n-k} \la \ge 0$ except for a bounded set, there exist finite limits $M_{s,k}^{\pm} = \lim\limits_{z \to 0, \, \mbox{Re}\, z > 0} M_{s,k}(\la \pm \mathrm{i} z)$.
\end{thm}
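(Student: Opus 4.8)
The plan is to pass to the $\rho$-plane via $\la = \rho^n$ and to exploit the conformal bijection between the double sector $G_k$ of \eqref{defGk} and the cut plane $\Pi_{(-1)^{n-k}}$. A direct check of the boundary angles shows this: if $(-1)^{n-k} = 1$ then $\arg\rho \in (0, 2\pi/n)$, so $\arg\la \in (0,2\pi)$ and the image is $\mathbb C \setminus [0,\infty) = \Pi_1$; if $(-1)^{n-k} = -1$ then $\arg\rho \in (-\pi/n, \pi/n)$, so $\arg\la \in (-\pi,\pi)$ and the image is $\mathbb C \setminus (-\infty,0] = \Pi_{-1}$. In both cases the two boundary rays of $G_k$ are carried onto the two sides of the cut $\{(-1)^{n-k}\la \ge 0\}$. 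Thus it suffices to show that each $M_{s,k}$, viewed as a function of $\rho$, is meromorphic in $G_k$ with finitely many poles and extends continuously to $\bar G_k$ off a bounded exceptional set. The starting identity is $M_{s,k}(\la) = U_s(\Phi_k(\cdot,\la))$, obtained by applying the functional $U_s$ from \eqref{bc} to the columns of $\Phi = CM$ and using \eqref{initC1}: $U_s(\Phi_k) = \sum_j M_{j,k} U_s(C_j) = M_{s,k}$. For $s \le k$ this reproduces the unit lower-triangular structure of $M$, so only the entries with $s > k$ need analysis.

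Next I would represent the Weyl solution $\Phi_k$ through the Birkhoff solutions of Theorem~\ref{thm:Birkz}. In each of the two sectors $\Gamma \subset G_k$ the functions $\{z_{1,k,\al}, \ldots, z_{k,k,\al}, y_{k+1,\al},\ldots, y_{n,\al}\}$ form a FSS, and by the ordering \eqref{order} the solutions $y_{k+1,\al}, \ldots, y_{n,\al}$ grow strictly faster than $\exp(\rho\om_k x)$, whereas the $z_{p,k,\al}$ are $O(\rho^j \exp(\rho\om_k x))$. Since $G_k$ was chosen precisely so that the sets $\{\om_j\}_{j\le k}$ and $\{\om_j\}_{j>k}$ are preserved across the interface of the two sectors, the growth condition in \eqref{bcPhi1} forces the representation $\Phi_k = \sum_{p=1}^{k} A_{p,k}(\la)\, z_{p,k,\al}$ consistently on all of $G_k$. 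The remaining conditions $U_j(\Phi_k) = \de_{j,k}$, $j = \overline{1,k}$, of \eqref{bcPhi1} give the $k \times k$ linear system $\sum_{p=1}^k A_{p,k}\, U_j(z_{p,k,\al}) = \de_{j,k}$, whose determinant is
\[
\Delta_k(\la) := \det\bigl[U_j(z_{p,k,\al})\bigr]_{j,p=1}^{k}.
\]
Solving by Cramer's rule and substituting into $M_{s,k} = \sum_{p=1}^k A_{p,k}\, U_s(z_{p,k,\al})$ exhibits each Weyl function as $M_{s,k} = \Delta_{s,k}(\la)/\Delta_k(\la)$, where $\Delta_{s,k}$ is the $k \times k$ determinant obtained from $\Delta_k$ by replacing the functional $U_k$ in its $k$-th row with $U_s$. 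By properties (1) and (2) of Theorem~\ref{thm:Birkz}, the entries $U_j(z_{p,k,\al})$ are analytic in $\rho \in G_k$, $|\rho| \ge \rho_\al$, and continuous on $\bar G_k$, $|\rho| \ge \rho_\al$; hence $\Delta_k$ and $\Delta_{s,k}$ inherit these properties and $M_{s,k}$ is meromorphic in $G_k$. Letting $\al \to \infty$, so that $\rho_\al \to 0$, removes the restriction $|\rho| \ge \rho_\al$ and yields analyticity up to a punctured neighbourhood of $\la = 0$.

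It remains to locate the poles and to produce the boundary values, and here lies the main work. The poles of $M_{s,k}$ are contained in the zero set of $\Delta_k$; since $\Delta_k$ is analytic and (by the argument below) not identically zero, its zeros are isolated, giving an at most countable set. The decisive point is boundedness of this set, i.e. that $\Delta_k(\la) \ne 0$ for all large $|\rho|$ in $\bar G_k$. This requires extracting the leading asymptotics of $\det[U_j(z_{p,k,\al})]_{j,p=1}^k$ as $|\rho| \to \infty$ from the estimates of Theorem~\ref{thm:Birkz} together with the explicit form of the functionals \eqref{bc}, and verifying that the leading term does not vanish — the familiar nonvanishing of a Birkhoff characteristic determinant. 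I expect this asymptotic analysis to be the principal obstacle, precisely because the $z_{p,k,\al}$ carry only $O$-type rather than sharp asymptotics, so the dominant behaviour of $\Delta_k$ must be isolated with care, most likely by reducing it to the sharp asymptotics \eqref{asympty} of the $y$-solutions of Theorem~\ref{thm:Birky1}. Finally, for the boundary limits: on each of the two boundary rays of $G_k$ the $z$-solutions extend continuously from within $G_k$ by property (1) of Theorem~\ref{thm:Birkz}, so $\Delta_k$ and $\Delta_{s,k}$ extend continuously to $\bar G_k$, and away from the bounded zero set of $\Delta_k$ the ratio $M_{s,k}$ is continuous up to the boundary. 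Translating back through $\la = \rho^n$, the two boundary rays correspond to the two sides of the cut $\{(-1)^{n-k}\la \ge 0\}$, so the limits $M_{s,k}^{\pm} = \lim_{z \to 0,\, \mathrm{Re}\, z > 0} M_{s,k}(\la \pm \mathrm i z)$ exist and are finite off a bounded exceptional set, as claimed.
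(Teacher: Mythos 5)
Your proposal is correct and follows essentially the same route as the paper's proof: the paper likewise expands the Weyl solutions $\Phi_k$ in the first $k$ Birkhoff solutions, writes $M_{s,k}$ as a ratio of determinants $\det[U_j(\cdot)]$ (formula \eqref{relM}), and then invokes the analyticity/continuity properties of the systems from Theorems~\ref{thm:Birky1} and~\ref{thm:Birkz} together with the family of FSS depending on $\al$ with $\rho_\al \to 0$ to cover a punctured neighbourhood of $\la = 0$ and to glue across the interior ray of $G_k$. The ``principal obstacle'' you flag --- nonvanishing of the characteristic determinant for large $|\rho|$ --- is exactly what the paper leaves implicit in its reference to Yurko's Theorem~2.1.1 and the sharp asymptotics \eqref{asympty}, so your proposed resolution (reducing the $z$-determinant to the $y$-system, whose leading term is the nonzero Vandermonde-type factor appearing in Lemma~\ref{lem:asymptPhi}) is the intended one.
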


The proof of Theorem~\ref{thm:Weyl} repeats the proof of Theorem~2.1.1 in \cite{Yur02}, so we sketch it briefly.

\begin{proof}
For brevity, denote by $y_l := y_{l,0}$, $l = \overline{1,n}$, the Birkhoff solutions from Theorem~\ref{thm:Birky1} with $\alpha = 0$ in a fixed sector $\Gamma$.
In view of \eqref{bcPhi1}, the Weyl solutions $\Phi_k(x, \la)$ can be expanded as
\begin{equation} \label{Phiay}
\Phi_k(x, \la) = \sum_{l = 1}^k a_{k,l}(\rho) y_l(x, \rho), \quad k = \overline{1, n},
\end{equation}
where
\begin{equation} \label{defa}
a_{k,l}(\rho) := (-1)^{k+l} \frac{\det [U_j(y_r)]_{j = \overline{1, k-1}, \, r = \overline{1, k} \setminus l}}{\det [U_j(y_r)]_{j,r = \overline{1, k}}}.
\end{equation}
Since $M_{s,k}(\la) = U_s(\Phi_k)$, we obtain
\begin{equation} \label{relM}
M_{s,k}(\la) = \frac{\det [U_j(y_r)]_{j = \overline{1,k-1}, s, \, r = \overline{1,k}}}{\det [U_j(y_r)]_{j,r = \overline{1, k}}}.
\end{equation}

The similar arguments can be repeated for the Birkhoff systems $\{ y_l \}_{l = 1}^n := \{ y_{l, \alpha} \}_{l = 1}^n$ and $\{ y_l \}_{l = 1}^n := \{ z_{1,k,\al}, \ldots, z_{k,k,\al}, y_{k+1,\al}, \ldots, y_{n,\al} \}$ from Theorems~\ref{thm:Birky1} and \ref{thm:Birkz}, respectively. The relations of form \eqref{relM} together with the properties of solutions established in Theorems~\ref{thm:Birky1} and~\ref{thm:Birkz} yield the assertion of the theorem.
\end{proof}

Using \eqref{Phiay}, \eqref{defa} and the asymptotics \eqref{asympty}, we prove the following lemma.

\begin{lem} \label{lem:asymptPhi}
For each each fixed $x > 0$ and $\varphi$ such that $\{ \rho \colon \arg \rho = \varphi \} \subset \Gamma$, the following asymptotic relation holds
$$
\Phi_k^{[j]}(x, \la) = \rho^{-p_k} a_{k,k}^0 (\rho \om_k)^j \exp(\rho \om_k x) (1 + o(1)), \quad |\rho| \to \iy, \quad k = \overline{1, n}, \quad j = \overline{0, n-1},
$$
where $a_{k,k}^0 := \dfrac{d_{k-1,k-1}}{d_{k,k}} \ne 0$, $d_{k,k} = \det[\om_l^{p_s}]_{l,s = \overline{1,k}}$, $k = \overline{1, n}$, $d_{0,0} := 1$.
\end{lem}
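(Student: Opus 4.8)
The plan is to start from the expansion \eqref{Phiay}, $\Phi_k(x,\la)=\sum_{l=1}^k a_{k,l}(\rho)\,y_l(x,\rho)$, which upon taking quasi-derivatives gives $\Phi_k^{[j]}(x,\la)=\sum_{l=1}^k a_{k,l}(\rho)\,y_l^{[j]}(x,\rho)$, and to reduce the problem to finding the $|\rho|\to\infty$ asymptotics of the coefficients $a_{k,l}(\rho)$ from \eqref{defa}. These are ratios of determinants built from the entries $U_j(y_r)$, so the first step is to find the asymptotics of those entries. Substituting \eqref{asympty} at $x=0$ into the boundary functionals \eqref{bc}, the highest quasi-derivative dominates, and I obtain $U_j(y_r)=y_r^{[p_j]}(0)+\sum_{i=1}^{p_j}u_{j,i}y_r^{[i-1]}(0)=(\rho\om_r)^{p_j}(1+o(1))=\rho^{p_j}\om_r^{p_j}(1+o(1))$, uniformly along the chosen ray.

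Next I would factor $\rho^{p_j}$ out of the $j$-th row of each determinant in \eqref{defa}. For the denominator this yields $\det[U_j(y_r)]_{j,r=\overline{1,k}}=\bigl(\prod_{j=1}^k\rho^{p_j}\bigr)\det[\om_r^{p_j}(1+o(1))]_{j,r=\overline{1,k}}$, and since the entries converge to $\om_r^{p_j}$, the remaining determinant tends to $\det[\om_r^{p_j}]_{j,r=\overline{1,k}}=d_{k,k}$ (the two agree by transpose invariance of the determinant). Treating the $(k-1)\times(k-1)$ numerators the same way gives $a_{k,k}(\rho)=\rho^{-p_k}\dfrac{d_{k-1,k-1}}{d_{k,k}}(1+o(1))=\rho^{-p_k}a_{k,k}^0(1+o(1))$, while for $l<k$ the numerator produces a bounded determinant, so $a_{k,l}(\rho)=O(\rho^{-p_k})$.

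Finally I would combine these estimates with \eqref{asympty}. The $l=k$ term contributes $\rho^{-p_k}a_{k,k}^0(\rho\om_k)^j\exp(\rho\om_k x)(1+o(1))$, exactly the claimed leading behavior. For $l<k$, the term is $O(\rho^{-p_k}\rho^j\exp(\rho\om_l x))$, and its ratio to the leading term is $O(\exp((\mathrm{Re}(\rho\om_l)-\mathrm{Re}(\rho\om_k))x))$. Here it is essential that $\arg\rho=\vv$ is fixed strictly inside $\Gamma$: by the strict ordering \eqref{order} the exponent equals $|\rho|\bigl(\cos(\vv+\arg\om_l)-\cos(\vv+\arg\om_k)\bigr)x$ with a strictly negative coefficient, so for each fixed $x>0$ these terms are $o(1)$ relative to the leading one, and summing over $l=\overline{1,k-1}$ yields the asserted asymptotics.

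I expect the main obstacle to be the nonvanishing $a_{k,k}^0\neq 0$, i.e. $d_{k-1,k-1}\neq 0$ and $d_{k,k}\neq 0$. This is not automatic for an arbitrary exponent set $\{p_s\}$ — a generalized Vandermonde determinant in roots of unity can degenerate — so it must be deduced from the specific ordering \eqref{order} of $\{\om_k\}$ inside the sector $\Gamma$, as in Yurko \cite{Yur02}: one verifies that the ordering by $\mathrm{Re}(\rho\om_k)$ never realizes a vanishing configuration. A secondary technical point is the uniformity of all the $o(1)$ estimates in $|\rho|$ along the ray, which is inherited from the uniform asymptotics of Theorem~\ref{thm:Birky1}.
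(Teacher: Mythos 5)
Your proposal is correct and follows essentially the same route as the paper, whose own ``proof'' is only the remark that the lemma follows from \eqref{Phiay}, \eqref{defa} and the asymptotics \eqref{asympty} --- precisely the chain you carry out (asymptotics $U_j(y_r)=\rho^{p_j}\om_r^{p_j}(1+o(1))$, row-wise factoring of $\rho^{p_j}$ in the numerator and denominator determinants, and dominance of the $l=k$ term along a ray fixed strictly inside $\Gamma$). The nonvanishing $d_{k,k}\neq 0$ that you flag is likewise inherited from Yurko's theory for the sector ordering \eqref{order} rather than reproved in the paper, so your treatment matches the paper's.
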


Along with the problem $\mathcal L = \mathcal L_I(\Sigma, U)$, consider another problem $\tilde{\mathcal L} = \mathcal L_I(\tilde \Sigma, \tilde U)$ of the same form but with different coefficients $\tilde \Sigma$, $\tilde U$. We agree that, if a symbol $\ga$ denotes an object related to $\mathcal L$, then the symbol $\tilde \ga$ with tilde denotes the analogous object related to $\tilde{\mathcal L}$. In particular, $U = P_U L_U$, $\tilde U = P_{\tilde U} L_{\tilde U}$. 

\begin{thm} \label{thm:uniq}
If $P_U = P_{\tilde U}$ and $M(\la) \equiv \tilde M(\la)$, then $\Sigma = \tilde \Sigma$ (i.e. $\sigma_{\nu}(x) = \tilde \sigma_{\nu}(x)$ a.e. on $\mathbb R_+$, $\nu = \overline{0, n-2}$) and $U = \tilde U$.
\end{thm}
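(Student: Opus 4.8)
The plan is to apply the method of spectral mappings and reduce the uniqueness assertion to Lemma~\ref{lem:PF-half}. First I would introduce the matrix of spectral mappings
$$
P(x,\la) := C(x,\la)\,\tilde C(x,\la)^{-1},
$$
where $C,\tilde C$ are the fundamental matrices of the systems \eqref{sys} for $\mathcal L$ and $\tilde{\mathcal L}$. Since \eqref{initC1} gives $U\,C(0,\la) = I_n$, we have $C(0,\la) = U^{-1}$; as $\mathrm{tr}\,\Lambda = 0$, the Wronskian $\det C(x,\la) = \det(U^{-1})\exp\big(\int_0^x \mathrm{tr}\,F\big)$ is nonzero and independent of $\la$, so $\tilde C(x,\la)^{-1}$ is entire in $\la$ and hence $P(x,\la)$ is entire for each fixed $x$. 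Using $\Phi = CM$, $\tilde\Phi = \tilde C\tilde M$ and the hypothesis $M\equiv\tilde M$, one also obtains the second representation $P(x,\la) = \Phi(x,\la)\,\tilde\Phi(x,\la)^{-1}$; the apparent poles of this expression cancel against those described in Theorem~\ref{thm:Weyl}, consistently with entirety, and it is this representation that controls the behaviour for large $|\rho|$.

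Next I would establish that $P(x,\la)$ is independent of $\la$. Fixing a sector $\Gamma = \Gamma_k$ from \eqref{defGa} and using the asymptotics of the Weyl solutions in Lemma~\ref{lem:asymptPhi} together with the Birkhoff systems of Theorems~\ref{thm:Birky1} and \ref{thm:Birkz}, I would show that the entries of $P(x,\la)$ remain bounded as $|\rho|\to\infty$. Because $P_U = P_{\tilde U}$, the leading terms of $\Phi_k^{[j]}$ and $\tilde\Phi_k^{[j]}$ coincide (the factors $\rho^{-p_k}a_{k,k}^0$ and the exponentials $\exp(\rho\om_k x)$ are the same for both problems), so $P = \Phi\tilde\Phi^{-1}$ is bounded, and reading the asymptotics off in each sector shows that its above-diagonal entries tend to $0$ and its diagonal entries tend to $1$. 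Since the sectors cover the whole $\rho$-plane and $P$ is entire, Liouville's theorem forces $P(x,\la)\equiv P(x)$ to be independent of $\la$; the limiting values then identify $P(x)$ as unit lower-triangular.

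It remains to pin down $P(x)$. Differentiating $C(x,\la) = P(x)\tilde C(x,\la)$ and using $C' = (F+\Lambda)C$, $\tilde C' = (\tilde F + \Lambda)\tilde C$, I would obtain
$$
P'(x) + P(x)\tilde F(x) - F(x)P(x) = \la\bigl(E_{n,1}P(x) - P(x)E_{n,1}\bigr).
$$
As the left-hand side is independent of $\la$, both sides vanish, which gives precisely equation \eqref{PF}. Thus $P(x)$ is a unit lower-triangular solution of \eqref{PF} with $\Sigma,\tilde\Sigma\in\Sigma_I$, and Lemma~\ref{lem:PF-half} yields $P(x)\equiv I_n$ and $\Sigma = \tilde\Sigma$. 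Finally, $P\equiv I_n$ gives $C\equiv\tilde C$, whence $U^{-1} = C(0,\la) = \tilde C(0,\la) = \tilde U^{-1}$ and $U = \tilde U$, completing the argument.

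The main obstacle is the asymptotic analysis of $P = \Phi\tilde\Phi^{-1}$ that must simultaneously deliver boundedness and the unit lower-triangular structure. The delicate point is that a single $\la$-independent matrix has to be compatible with the asymptotics in every sector, while across the boundary between neighbouring sectors the roots $\om_j$ change their relative order; this is exactly why the two-sector regions $G_k$ defined by \eqref{defGk} and the auxiliary solutions $z_{s,k,\al}$ of Theorem~\ref{thm:Birkz}, together with the family parametrized by $\al$, are required to keep the estimates uniform. Establishing the triangular structure consistently in all sectors — rather than merely boundedness — is the crux, after which the algebraic reduction to Lemma~\ref{lem:PF-half} is routine.
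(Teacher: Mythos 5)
Your proposal follows essentially the same route as the paper's proof: the matrix of spectral mappings $\mathcal P(x,\la)=\Phi\tilde\Phi^{-1}=C\tilde C^{-1}$, entirety in $\la$ from the $C$-representation and $\det C\ne 0$, large-$|\rho|$ asymptotics from Lemma~\ref{lem:asymptPhi} using $P_U=P_{\tilde U}$, a Liouville-type argument giving a $\la$-independent unit lower-triangular $\mathcal P(x)$, then the derivation of \eqref{PF} and the application of Lemma~\ref{lem:PF-half}, with $U=\tilde U$ read off at $x=0$.

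One step is misstated, though in a fixable way: the entries of $\mathcal P$ below the diagonal are \emph{not} bounded a priori. What the asymptotics of Lemma~\ref{lem:asymptPhi} actually give is $\mathcal P_{k,j}(x,\la)=\rho^{k-j}(\de_{k,j}+o(1))$ along fixed rays $\arg\la=\beta\notin\{0,\pi\}$, so for $k>j$ one only gets growth $o(|\rho|^{k-j})=o(|\la|^{(k-j)/n})$, and plain Liouville's theorem for bounded entire functions does not apply to those entries. The correct conclusion uses Phragm\'en--Lindel\"of's theorem to extend the ray estimates past the excluded directions and then the generalized Liouville theorem (an entire function of growth $O(|\la|^{\alpha})$ with $\alpha<1$ is constant), which is exactly what the paper invokes; with that replacement your argument, including the subsequent reduction to \eqref{PF} and Lemma~\ref{lem:PF-half}, goes through unchanged.
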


\begin{proof}
The proof is based on the method of spectral mappings (see \cite{Yur02, Bond21}).
Define the matrix of spectral mappings
\begin{equation} \label{defP}
\mathcal P(x, \la) := \Phi(x, \la) (\tilde \Phi(x, \la))^{-1}.
\end{equation}
Using the relations
$$
\Phi(x, \la) = C(x, \la) M(\la), \quad \tilde \Phi(x, \la) = \tilde C(x, \la) \tilde M(\la),
$$
and $M(\la) \equiv \tilde M(\la)$, we obtain
\begin{equation} \label{PC}
\mathcal P(x, \la) = C(x, \la) (\tilde C(x, \la))^{-1}.
\end{equation}

Due to the definition of $C(x, \la)$, this matrix function solves the initial value problem
$$
C'(x, \la) = (F(x) + \Lambda) C(x, \la), \quad x \in \mathbb R_+, \quad C(0, \la) = U^{-1}.
$$
Hence, $C(x, \la)$ is entire in $\la$ for each fixed $x \in \mathbb R_+$.
Since $\mbox{trace}\,(F(x)) \equiv 0$, then $\det(C(x, \la))$ does not depend on $x$. In view of the initial condition $C(0,\la) = U^{-1}$, we have $\det(C(x, \la)) \ne 0$. The same arguments are valid for $\tilde C(x, \la)$. Therefore, \eqref{PC} implies that $\mathcal P(x, \la)$ is entire in $\la$ for each fixed $x \in \mathbb R_+$ and $\mathcal P(0, \la) = U^{-1} \tilde U$. 

On the other hand, using \eqref{defP} and the asymptotics of Lemma~\ref{lem:asymptPhi} for the entries of $\Phi(x, \la)$ and $\tilde \Phi(x, \la)$, we obtain the following asymptotic relation for the entries of $\mathcal P(x, \la) = [\mathcal P_{k,j}]_{k,j = 1}^n$:
$$
\mathcal P_{k,j}(x, \la) = \rho^{k-j} (\de_{k,j} + o(1)), \quad k, j = \overline{1, n},
$$
for each fixed $x > 0$ as $|\la| \to \infty$ along any fixed ray $\arg \la = \beta \not\in \{ 0, \pi \}$. Applying Phragmen-Lindel\"of's theorem (see \cite{BFY14}) and Liouville's theorem, we conclude that $\mathcal P(x, \la)$ equals a constant unit lower-triangular matrix $\mathcal P(x)$ for each fixed $x > 0$ and 
\begin{equation} \label{P0}
\mathcal P(0) = U^{-1} \tilde U.
\end{equation}

Using \eqref{defP} and the relations
$$
\Phi'(x, \la) = (F(x) + \Lambda) \Phi(x, \la), \quad 
\tilde \Phi'(x, \la) = (\tilde F(x) + \Lambda) \tilde \Phi(x, \la),
$$
we derive
\begin{equation} \label{PF2}
\mathcal P'(x) + \mathcal P(x) \tilde F(x) = F(x) \mathcal P(x).
\end{equation}
Hence, $\mathcal P(x)$ satisfies the assumptions of Lemma~\ref{lem:PF-half}, which yields $\Sigma = \tilde \Sigma$ and $\mathcal P(x) \equiv I_n$. Using \eqref{P0}, we conclude that $U = \tilde U$.
\end{proof}

\begin{remark} \label{rem:arbF}
In fact, instead of the boundary value problem~\eqref{eqv},\eqref{bc}, we study the first-order system \eqref{sys} with the boundary condition $U \vec y(0) = 0$. Nevertheless, in the proof of Theorem~\ref{thm:uniq}, the special structure of the matrix $F = \mathscr F_I(\Sigma)$ constructed by the coefficients $\Sigma$ of the differential expression \eqref{defl} is important. An arbitrary matrix function $F \in \mathfrak F_n$ cannot be uniquely recovered from the corresponding Weyl matrix, because the assertion of Lemma~\ref{lem:PF-half} does not hold for an arbitrary $F \in \mathfrak F_n$. This is shown by Example~\ref{ex:1}. 
\end{remark}

\begin{example} \label{ex:1}
Suppose that $n = 2$, $F, \tilde F \in \mathfrak F_2$,
\begin{equation} \label{Fab}
F = \begin{bmatrix}
        a & 1 \\
        b & -a
    \end{bmatrix}, \quad
\tilde F = \begin{bmatrix}
                \tilde a & 1 \\
                \tilde b & -\tilde a
            \end{bmatrix},
\end{equation}
and a unit lower-triangular matrix function $P(x)$ satisfies \eqref{PF}. It is easy to see that the system \eqref{PF} in our case is equivalent to
$$
p_{2,1} = \tilde a - a, \quad p_{2,1}' + \tilde a^2 - a^2 + \tilde b - b = 0.
$$
The latter relations do not imply $p_{2,1} = 0$, $a = \tilde a$, $b = \tilde b$. 
For instance, one can take $a = b = 0$, an arbitrary function $\tilde a \in (L_1 \cap AC_{loc})(\mathbb R_+)$ such that $\tilde a' \in L_1(\mathbb R_+)$, $p_{21} := \tilde a$, $\tilde b := -(p_{21}' + p_{21}^2)$. Thus, the condition \eqref{PF} is fulfilled but the assertion of Lemma~\ref{lem:PF-half} does not hold for this case. The matrix function $F(x)$ of form \eqref{Fab} defines the quasi-derivatives 
$$
y^{[1]} = y' - a y, \quad y^{[2]} = (y^{[1]})' + a y^{[1]} - b y.
$$
Hence, the equation $y^{[2]} = \la y$ turns into the Sturm-Liouville equation $y'' - q(x) y = \la y$ with the potential $q = a' + a^2 + b$. Even if we reconstruct the potential $q$ by using some spectral data, we cannot uniquely determine the functions $a$ and $b$.
\end{example}

\section{Inverse problem on a finite interval} \label{sec:finite}

The inverse spectral problem for the differential expression \eqref{defl} on the finite interval $(0, 1)$ has been considered in \cite{Bond21} for the Mirzoev-Shkalikov case: $i_{2k+j} = m - k - j$, $j = 0, 1$. In contrast to the half-line, for a finite interval $W_1^{-{k_1}}(0, 1) \subset W_1^{-{k_2}}(0,1)$ if $k_1 < k_2$. Therefore, the case of arbitrary $I \in \mathcal I_n$ can be reduced to the Mirzoev-Shkalikov case, and the results of \cite{Bond21} can be applied. However, in this section, we show that the regularization of Section~\ref{sec:reg} for any $I \in \mathcal I_n$ can be used for investigating inverse problems. In addition, we discuss the recovery of the boundary conditions, improving the results of \cite{Bond21}.

Suppose that $I \in \mathcal I_n$, $\Sigma \in \Sigma_{I, (0,1)}$.
Denote by $\mathcal L = \mathcal L_I(\Sigma, U, V)$ the differential equation
\begin{equation} \label{eqv-finite}
\ell_n(y) = \la y, \quad x \in (0, 1),
\end{equation}
given together with the linear forms \eqref{bc} and
$$
V_s(y) := y^{[p_{s,1}]}(1) + \sum_{j = 1}^{p_{s,1}} v_{s,j} y^{[j-1]}(1), \quad s = \overline{1, n},
$$
where $V = [v_{s,j}]_{s,j = 1}^n$ is a constant $(n \times n)$ matrix of form $V = P_V L_V$, $P_V$ is the permutation matrix with the unit elements at the positions $(k, p_{k,1} + 1)$, $k = \overline{1, n}$, and $L_V$ is a unit lower-triangular matrix.

Denote by $\{ C_k(x, \la) \}_{k = 1}^n$ and $\{ \Phi_k(x, \la) \}_{k = 1}^n$ the solutions of equation \eqref{eqv-finite} satisfying the conditions \eqref{initC1} and
\begin{equation} \label{bcPhi2}
U_s(\Phi_k) = \de_{s,k}, \quad s = \overline{1, k}, \qquad
V_l(\Phi_k) = 0, \quad l = \overline{k+1,n},
\end{equation}
respectively. Define the matrix functions $C(x, \la) = [\vec C_k(x, \la)]_{k = 1}^n$ and $\Phi(x, \la) = [\vec \Phi_k(x, \la)]_{k = 1}^n$. Then, $C(x, \la) = \Phi(x, \la) M(\la)$, where $M(\la)$ is \textit{the Weyl matrix}. It is shown in \cite{Bond21} that $M(\la)$ is a unit lower-triangular matrix function meromorphic in $\la$.

It has been proved in \cite{Bond21} that the Weyl matrix $M(\la)$ uniquely specifies the coefficients $\Sigma$ in the Mirzoev-Shkalikov case if the matrices $U$ and $V$ are known a priori. Here, we focus on the recovery of the boundary conditions in more details for various $I \in \mathcal I_n$.

Using the entries of the matrix $L_U = [l_{k,j}]_{k,j = 1}^n$, define the vectors
$$
\begin{array}{ll}
L_{2k} & := (l_{n-s,k+1} + l_{n-k, s+1} \colon s = \overline{k, k + i_{2k}-1}), \quad k = \overline{0, m-1}, \\
L_{2k+1} & := (l_{n-s,k+1} - l_{n-k, s+1} \colon s = \overline{k+1, k + i_{2k+1}}), \quad k = \overline{0, m + \tau -2}.
\end{array}
$$

\begin{ip} \label{ip:finite}
Suppose that $P_U$ and $L_{\nu}$, $\nu = \overline{0, n-2}$, are known a priori.
Given the Weyl matrix $M(\la)$, find $\Sigma$, $U$, and $V$.
\end{ip}

Note that, in the regular case $i_{\nu} = 0$, $\nu = \overline{0,n-2}$, no elements of $L_U$ are required to be known in Inverse Problem~\ref{ip:finite}. In the case $i_{2k} = i_{2k+1} + 1 =: j_k$, the values $L_{2k}$ and $L_{2k+1}$ can be replaced by
$l_{n-s, k + 1}$ and $l_{n-k,s+1}$, $s = \overline{k, k + j_k - 1}$. In particular, in the Mirzoev-Shkalikov case, the values $\{ l_{k,j} \}_{k = \overline{m + \tau + 1, n}, j = \overline{1, m}}$ are required to be known.

Along with the problem $\mathcal L = \mathcal L_I(\Sigma, U, V)$, consider another problem $\tilde{\mathcal L} = \mathcal L_I(\tilde \Sigma, \tilde U, \tilde V)$ of the same form but with different coefficients $\tilde \Sigma$, $\tilde U$, $\tilde V$. We agree that, if a symbol $\ga$ denotes an object related to $\mathcal L$, then the symbol $\tilde \ga$ with tilde denotes the analogous object related to $\tilde {\mathcal L}$. 

In view of Remark~4.1 in \cite{Bond21}, the right-hand boundary condition coefficients cannot be uniquely recovered from the Weyl matrix. However, some equivalence classes can be considered, so we need the following definition.

\begin{df} \label{def:V}
Let $I \in \mathcal I_n$, $\Sigma \in \Sigma_{I, (0,1)}$, and $U = P_U L_U$ be fixed. Then the matrices $V = P_V L_V$ and $\tilde V = P_{\tilde V} L_{\tilde V}$ are called \textit{equivalent} if the corresponding problems $\mathcal L = \mathcal L(\Sigma, U, V)$ and $\tilde {\mathcal L} = \mathcal L(\Sigma, U, \tilde V)$ have equal Weyl solutions: $\Phi_k(x, \la) \equiv \tilde \Phi_k(x, \la)$, $k = \overline{1, n}$.
\end{df}

The following uniqueness theorem generalizes Theorem~6.2 from \cite{Bond21}.

\begin{thm} \label{thm:finite}
If $P_U = P_{\tilde U}$, $L_{\nu} = \tilde L_{\nu}$, $\nu = \overline{0, n-2}$, and $M(\la) \equiv \tilde M(\la)$, then $\Sigma = \tilde \Sigma$ (i.e. $\sigma_{\nu}(x) = \tilde \sigma_{\nu}(x)$ a.e. on $(0,1)$, $\nu = \overline{0, n-2}$), $U = \tilde U$, and $V$ is equivalent to $\tilde V$ in sense of Definition~\ref{def:V}.
\end{thm}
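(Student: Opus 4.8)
The plan is to mirror the structure of the proof of Theorem~\ref{thm:uniq} (the half-line case), adapting it to the finite interval by invoking Lemma~\ref{lem:PF-finite} in place of Lemma~\ref{lem:PF-half}. As before, I would define the matrix of spectral mappings
\[
\mathcal P(x, \la) := \Phi(x, \la) (\tilde \Phi(x, \la))^{-1}.
\]
Since on the finite interval we have $C(x,\la) = \Phi(x,\la) M(\la)$ and $\tilde C(x,\la) = \tilde \Phi(x,\la)\tilde M(\la)$, the hypothesis $M(\la) \equiv \tilde M(\la)$ gives $\mathcal P(x,\la) = C(x,\la)(\tilde C(x,\la))^{-1}$. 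Because each $C(x,\la)$ solves an initial value problem with entire (in $\la$) coefficients and $\det C(x,\la)$ is constant in $x$ and nonzero, $\mathcal P(x,\la)$ is entire in $\la$ for each fixed $x$. A growth analysis analogous to the half-line argument (using the asymptotics underlying the Birkhoff solutions) together with Phragmen--Lindel\"of and Liouville should force $\mathcal P(x,\la)$ to be, for each fixed $x$, a constant unit lower-triangular matrix $\mathcal P(x)$. Differentiating the defining relation then yields, exactly as in~\eqref{PF2}, the matrix equation
\[
\mathcal P'(x) + \mathcal P(x)\tilde F(x) = F(x)\mathcal P(x), \quad x \in (0,1).
\]

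The new ingredient, compared with Theorem~\ref{thm:uniq}, is that here I must verify the \emph{initial conditions} $IC(\nu)$ of Lemma~\ref{lem:PF-finite} at $x = 0$ in order to conclude $\mathcal P(x) \equiv I_n$ and $\Sigma = \tilde\Sigma$. This is precisely where the extra hypothesis $L_{\nu} = \tilde L_{\nu}$, $\nu = \overline{0, n-2}$, enters. The value $\mathcal P(0)$ is determined by the left-hand boundary forms: reasoning as in the derivation of~\eqref{P0}, one has $\mathcal P(0) = U^{-1}\tilde U = (P_U L_U)^{-1} P_{\tilde U} L_{\tilde U}$. Under $P_U = P_{\tilde U}$ this simplifies, and the entries $p_{k,j}(0)$ of $\mathcal P(0)$ are expressible through the entries of $L_U$ and $L_{\tilde U}$. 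The plan is to show that the combinations appearing in $IC(2k)$ and $IC(2k+1)$, namely $p_{n-s,k+1}(0) \pm p_{n-k,s+1}(0)$, reduce to the differences $L_\nu - \tilde L_\nu$ (componentwise), which vanish by hypothesis. I would therefore match the index ranges in the definitions of $L_{2k}$, $L_{2k+1}$ against those in~\eqref{icP} and check that the assumed equalities of the vectors $L_\nu$ translate exactly into the required $IC(\nu)$.

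I expect this translation step to be the main obstacle: one must carefully track how the entries of $U^{-1}\tilde U$ at position $(k,j)$ relate to the lower-triangular entries $l_{k,j}$ and $\tilde l_{k,j}$ through the permutation $P_U$, and confirm that the symmetric/antisymmetric combinations prescribed by $IC(2k)$ and $IC(2k+1)$ are precisely the data encoded in $L_{2k}$ and $L_{2k+1}$. Once $\mathcal P(x) \equiv I_n$ and $\Sigma = \tilde\Sigma$ are established by Lemma~\ref{lem:PF-finite}, the relation $\mathcal P(0) = I_n$ combined with $P_U = P_{\tilde U}$ forces $L_U = L_{\tilde U}$, hence $U = \tilde U$. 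Finally, $\mathcal P(x,\la) \equiv I_n$ means $\Phi(x,\la) \equiv \tilde\Phi(x,\la)$, which is exactly the statement that $V$ and $\tilde V$ are equivalent in the sense of Definition~\ref{def:V}; this completes the proof without any attempt to recover $V$ itself, consistent with the nonuniqueness noted via Remark~4.1 of~\cite{Bond21}.
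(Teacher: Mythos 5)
Your overall skeleton is the intended one (the paper omits the proof precisely because it is the spectral-mappings argument of Theorem~\ref{thm:uniq} combined with Lemma~\ref{lem:PF-finite}), and your endgame is correct: once the lemma yields $\mathcal P(x)\equiv I_n$ and $\Sigma=\tilde\Sigma$, the identity $\mathcal P(0)=U^{-1}\tilde U=I_n$ gives $U=\tilde U$, and $\Phi\equiv\tilde\Phi$ is exactly the equivalence of $V$ and $\tilde V$ in the sense of Definition~\ref{def:V}. The gap is the step you flag but defer — verifying the initial conditions $IC(\nu)$ from $L_\nu=\tilde L_\nu$ — and it is not mere index bookkeeping; it is the only genuinely new content of this theorem relative to Theorem 6.2 of \cite{Bond21}. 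Your stated plan, that the combinations $p_{n-s,k+1}(0)\pm p_{n-k,s+1}(0)$ ``reduce to the differences $L_\nu-\tilde L_\nu$ componentwise,'' is false as pure algebra on $\mathcal P(0)=L_U^{-1}L_{\tilde U}$: inverting $L_U$ produces bilinear correction terms. Concretely, for $n=4$ one gets $p_{3,1}(0)=\tilde l_{3,1}-l_{3,1}$ but $p_{4,1}(0)=(\tilde l_{4,1}-l_{4,1})-l_{4,3}(\tilde l_{3,1}-l_{3,1})$ (and even these formulas already use $p_{2,1}(0)=0$, which is itself not algebraic — see below). Now take $I=(1,0,0)$: the hypothesis of the theorem is only $l_{4,1}=\tilde l_{4,1}$, since $L_1$ and $L_2$ are empty, while $IC(0)$ demands $p_{4,1}(0)=0$; algebraically all you obtain is $p_{4,1}(0)=-l_{4,3}(\tilde l_{3,1}-l_{3,1})$, which no matching of index ranges will make vanish.

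What closes this gap is input your proposal never invokes: the differential equation \eqref{PF2} constrains $\mathcal P(x)$ pointwise through Step~1 of the proof of Lemma~\ref{lem:PF-half} (which applies verbatim to $\mathcal P$). That step forces entries such as $p_{2,1}$ and $p_{4,3}$ to vanish identically and yields the relations \eqref{pf1}, \eqref{pf2}; in the example above, row $m$ of $F=\mathscr F_{(1,0,0)}(\Sigma)$ is $(0,0,1,0)$ (compare $F_{1,0}$ in Section~\ref{sec:ex}), so \eqref{pf1} gives $p_{3,1}\equiv\tilde f_{2,1}-f_{2,1}\equiv 0$, hence $\tilde l_{3,1}=l_{3,1}$, and only then $p_{4,1}(0)=0$. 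In general the verification of $IC(\nu)$ must interleave the algebra of $L_{\tilde U}=L_U\,\mathcal P(0)$ with these ODE-forced identities (including equalities between distinct entries of $\mathcal P$, e.g.\ $p_{3,1}\equiv p_{4,2}$ for $n=4$, $i_1=1$), proceeding inductively from the entries nearest the diagonal outward: the equation supplies one linear relation between each symmetric pair of entries, and the hypothesis $L_\nu=\tilde L_\nu$ supplies the complementary one, and only together do they force both to vanish. A further warning: translating \eqref{icP} into the variables $r_{l,s}$ actually used in the proof of Lemma~\ref{lem:PF-finite} introduces factors $(-1)^{k-s}$, so the pairing between the vectors $L_\nu$ and the conditions needed in the induction is sign-sensitive; this is one more reason the ``componentwise'' shortcut cannot be taken for granted. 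In short: right architecture, but the decisive step is missing, and the method you propose for it would not succeed as stated.
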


The proof of Theorem~\ref{thm:finite} is analogous to the proof of Theorem~6.2 in \cite{Bond21} and relies on Lemma~\ref{lem:PF-finite}, so we omit it.

\begin{remark} \label{rem:uniqN}
Suppose that $\mathcal N \subseteq \{ 0, 1, \ldots, n-2 \}$ and the functions $( \sigma_{\nu} )_{\nu \in \mathcal N}$ are known a priori. Then, it is sufficient to know $L_{\nu}$ for $\nu = \overline{0, n-2} \setminus \mathcal N$ together with $P_U$ for the unique recovery of the problem $\mathcal L(\Sigma, U, V)$ from $M(\la)$.
\end{remark}

\begin{remark}
In contrast to Inverse Problem~\ref{ip:finite}, Inverse Problem~\ref{ip:main} on the half-line does not require the boundary condition coefficients $l_{k,j}$ to be known, roughly speaking, because of the implicit condition at infinity: $\sigma_{\nu} \in L_1(\mathbb R_+)$, $\nu = \overline{0, n-2}$.
\end{remark}

\section{Examples} \label{sec:ex}

\subsection{Case n = 2.}

The differential expression \eqref{defl} for $n = 2$ takes the form
$$
\ell_2(y) = y'' + (-1)^{i_0} \sigma_0^{(i_0)} y,
$$
where $i_0 \in \{ 0, 1 \}$, $I = (i_0)$, $\Sigma = (\sigma_0)$. First, consider the inverse problem on \textit{the half-line}.

\medskip

{\bf 1.} In the case $i_0 = 0$, equation \eqref{eqv} takes the form
\begin{equation} \label{eq20}
y'' + \sigma_0 y = \la y, \quad \sigma_0 \in L_1(\mathbb R_+).
\end{equation}
Using \eqref{defQ}, \eqref{defchi}, and \eqref{defF}, we obtain the matrix functions $Q = \mathscr Q_I(\Sigma)$, $F = \mathscr F_I(\Sigma)$:
$$
Q = \begin{bmatrix}
        \sigma_0 & 0 \\ 0 & 0
    \end{bmatrix},
\qquad
F = \begin{bmatrix}
        0 & 1 \\ -\sigma_0 & 0
    \end{bmatrix}.
$$
Hence, $y^{[1]} = y'$, $y^{[2]} = y'' + \sigma_0 y$. For definiteness, suppose that
\begin{equation} \label{PL2}
P_U = \begin{bmatrix}
        0 & 1 \\
        1 & 0
     \end{bmatrix},
\qquad
L_U = \begin{bmatrix}
        1 & 0 \\ h & 1
    \end{bmatrix}, \quad h \in \mathbb C.
\end{equation}
Then
$$
U_1(y) = y'(0) + h y(0), \quad U_2(y) = y(0).
$$
The Weyl matrix has the form 
\begin{equation} \label{M2}
M(\la) = \begin{bmatrix}
            1 & 0 \\
            M_{2,1}(\la) & 1
        \end{bmatrix},
\quad M_{2,1}(\la) = U_2(\Phi_1),
\end{equation}
where $\Phi_1(x, \la)$ is the Weyl solution of equation \eqref{eq20} satisfying the boundary conditions
\begin{equation} \label{Phi2}
U_1(\Phi_1) = 1, \quad \Phi_1(x, \la) = O(\exp(-\rho x)), \quad x \to \infty, \quad \mbox{Re}\,\rho \ge 0, \: \rho \ne 0.
\end{equation}
Inverse Problem~\ref{ip:main} takes the following form.

\begin{ip} \label{ip:2}
Given the Weyl function $M_{2,1}(\la)$, find $\sigma_0$ and $h$.
\end{ip}

This is the standard inverse problem for the Sturm-Liouville operator on the half-line by the Weyl function, which has been considered, e.g., in \cite[Section~2.2]{FY01}. Theorem~\ref{thm:uniq} for this case is equivalent to Theorem~2.2.1 in \cite{FY01}.

\medskip

{\bf 2.} In the case $i_0 = 1$, equation \eqref{eqv} takes the form
\begin{equation} \label{eq21}
y'' - \sigma_0' y = \la y, \quad \sigma_0 \in (L_1 \cap L_2)(\mathbb R_+),
\end{equation}
where the derivative is understood in the sense of distributions.

Using \eqref{defQ}, \eqref{defchi}, and \eqref{defF}, we obtain the matrix functions $Q = \mathscr Q_I(\Sigma)$, $F = \mathscr F_I(\Sigma)$:
$$
Q = \begin{bmatrix}
        0 & \sigma_0 \\ \sigma_0 & 0
    \end{bmatrix},
\qquad
F = \begin{bmatrix}
        \sigma_0 & 1 \\ -\sigma_0^2 & -\sigma_0
    \end{bmatrix},
$$
Thus, $F(x)$ coincides with the well-known regularization matrix for the Sturm-Liouville operator with singular potential (see, e.g., \cite{Pfaff79, SS99}).
The quasi-derivatives have the form
$$
y^{[1]} = y' - \sigma_0 y, \quad y^{[2]} = (y^{[1]})' + \sigma_0 y^{[1]} + \sigma_0^2 y.
$$

Define $P_U$ and $L_U$ by \eqref{PL2}. Then
$$
U_1(y) = y^{[1]}(0) + h y(0), \quad U_2(y) = y(0).
$$
The Weyl matrix has the form \eqref{M2}, where $\Phi_1(x, \la)$ is the solution of equation \eqref{eq21} (in the sense of Definition~\ref{def:sol}) satisfying the boundary conditions \eqref{Phi2}. Inverse Problem~\ref{ip:main} for this case takes the form of Inverse Problem~\ref{ip:2}.

Suppose that the problems $\mathcal L$ with $i_0 = 0$, $\sigma_0 \in L_1(\mathbb R_+)$ and $\tilde {\mathcal L}$ with $\tilde i_0 = 1$, $\tilde \sigma_0 \in (L_1 \cap L_2)(\mathbb R_+)$ are equivalent to each other. 
Let us show that the corresponding inverse problems are also equivalent to each other. Comparing \eqref{eq20} and \eqref{eq21}, we conclude that $\sigma_0 = -\tilde \sigma_0'$, so
\begin{equation} \label{relsi}
\tilde \sigma_0 \in AC[0, \infty), \quad \tilde \sigma_0(x) = \tilde \sigma_0(0) - \int_0^x \sigma_0(t) \, dt.
\end{equation}
Note that the linear forms $U_1$ and $\tilde U_1$ for the problems $\mathcal L$ and $\tilde {\mathcal L}$, respectively, differ:
$$
U_1(y) = y'(0) + h y(0), \quad \tilde U_1(y) = y^{[1]}(0) + \tilde h y(0) = y'(0) - \tilde \sigma_0(0) y(0) + \tilde h y(0).
$$
These forms coincide with each other and provide the same Weyl function $M_{2,1}(\la) = \tilde M_{2,1}(\la)$ if and only if
\begin{equation} \label{relh}
h = -\tilde \sigma_0(0) + \tilde h.
\end{equation}
The relations \eqref{relsi} and \eqref{relh} together imply
\begin{equation} \label{equiv2}
\tilde \sigma_0(x) = \tilde h - h - \int_0^x \sigma_0(t) \, dt.
\end{equation}
Since $\sigma_0, \tilde \sigma_0 \in L_1(\mathbb R_+)$, then 
\begin{equation} \label{inf2}
    \tilde h - h - \int_0^{\iy} \sigma_0(t) \, dt = 0.
\end{equation}
The relations~\eqref{equiv2} and \eqref{inf2} give the one-to-one correspondence between the data $(\sigma_0, h) \leftrightarrow (\tilde \sigma_0, \tilde h)$. Thus, the reconstruction of either $(\sigma_0, h)$ or $(\tilde \sigma_0, \tilde h)$ by using $M_{2,1}(\la)$ is equivalent.

The situation is different for \textit{a finite interval}. For definiteness, consider
$$
P_V = \begin{bmatrix}
            1 & 0 \\ 0 & 1
      \end{bmatrix}, \qquad
L_V = \begin{bmatrix}
            1 & 0 \\ H & 1
      \end{bmatrix}, \quad H \in \mathbb C.
$$
Then $V_2(y) = y^{[1]}(1) + H y(1)$. The Weyl matrix has the form \eqref{M2}, where $\Phi_1(x, \la)$ is the Weyl solution of the equation $\ell_2(y) = \la y$, $x \in (0, 1)$, satisfying the boundary conditions
$U_1(\Phi_1) = 1$, $V_2(\Phi_1) = 0$, where $U_1(y)$ is defined similarly to the half-line case. In the regular case $i_0 = 0$, Inverse Problem~\ref{ip:finite} takes the following form.

\begin{ip} \label{ip:20}
Given $M_{2,1}(\la)$, find $\sigma_0$, $h$, and $H$.
\end{ip}

Inverse Problem~\ref{ip:20} is the classical problem of the recovery of the Sturm-Liouville operator from the Weyl function, which is equivalent to Borg's problem by two spectra and to Marchenko's problem by the spectral function (see, e.g., \cite{Borg46, Mar77, FY01}).

In the singular case $i_1 = 0$, Inverse Problem~\ref{ip:finite} can be reformulated as follows.

\begin{ip} \label{ip:21}
Given $h$ and $M_{2,1}(\la)$, find $\sigma_0$ and $H$.
\end{ip}

Inverse Problem~\ref{ip:21} in various equivalent formulations was studied in \cite{HM-sd, HM-2sp, Gul19} and other papers. The uniqueness Theorem~\ref{thm:uniq} for Inverse Problems~\ref{ip:20} and~\ref{ip:21} corresponds to the previously known results.

Suppose that the problem $\mathcal L$ with $i_0 = 0$, $\sigma_0 \in L_1(0,1)$ is equivalent to the problem $\tilde {\mathcal L}$ with $i_0 = 1$, $\tilde \sigma_0 \in W_1^1[0, 1]$, that is, the relations \eqref{equiv2} and
\begin{equation} \label{H2}
    H = -\tilde \sigma_0(1) + \tilde H
\end{equation}
hold.
If $\tilde h$ is fixed, then \eqref{inf2} and \eqref{H2} give the one-to-one correspondence between the data $(\sigma_0, h, H) \leftrightarrow (\tilde \sigma_0, \tilde H)$. Consequently, Inverse Problem~\ref{ip:20} for $\mathcal L$ and Inverse Problem~\ref{ip:21} for $\tilde {\mathcal L}$ are equivalent to each other in this case. 

\subsection{Case n = 4.}

In \cite{Kon21}, the regularization matrices have been provided for the differential expression
\begin{equation} \label{lKon}
(py'')'' - (q^{(\al)} y')' + r^{(\be)} y,
\end{equation}
where $p, q, r$ are regular functions, $\al \in \{ 0, 1 \}$, $\be \in \{ 0, 1, 2 \}$. 
In the case $p \equiv 1$, \eqref{lKon} is equivalent to the differential expression \eqref{defl} for $n = 4$, $I = (i_0, i_1, i_2)$, $\Sigma = (\sigma_0, \sigma_1, \sigma_2)$ with $\sigma_1 = 0$:
$$
\ell_4(y) = y^{(4)} + (-1)^{i_2 + 1} (\sigma_2^{(i_2)}(x) y')' + (-1)^{i_0} \sigma_0^{(i_0)} y.
$$
Here $i_0 \in \{ 0, 1, 2 \}$, $i_2 \in \{ 0, 1 \}$. 
Let us consider all the six possible cases.
For convenience, denote $Q_{i_0, i_2} := \mathscr Q_I(\Sigma)$, $F_{i_0, i_2} := \mathscr F_I(\Sigma)$. 

Using \eqref{defQ}, \eqref{defchi}, and \eqref{defF}, we obtain
\begin{align*}
Q_{0,0} & = \begin{bmatrix}
            \sigma_0 & 0 & 0 \\
            0 & \sigma_2 & 0 \\
            0 & 0 & 0
        \end{bmatrix}, \qquad
Q_{1,0} = \begin{bmatrix}
            0 & \sigma_0 & 0 \\
            \sigma_0 & \sigma_2 & 0 \\
            0 & 0 & 0
         \end{bmatrix}, \qquad
Q_{2,0} = \begin{bmatrix}
            0 & 0 & \sigma_0 \\
            0 & \sigma_2 + 2 \sigma_0 & 0 \\
            \sigma_0 & 0 & 0
        \end{bmatrix},    \\
Q_{0,1} & = \begin{bmatrix}
            \sigma_0 & 0 & 0 \\
            0 & 0 & \sigma_2 \\
            0 & \sigma_2 & 0
         \end{bmatrix}, \qquad
Q_{1,1} = \begin{bmatrix}
             0 & \sigma_0 & 0 \\
             \sigma_0 & 0 & \sigma_2 \\
             0 & \sigma_2 & 0
          \end{bmatrix}, \qquad
Q_{2,1} = \begin{bmatrix}
            0 & 0 & \sigma_0 \\
            0 & 2 \sigma_0 & \sigma_2 \\
            \sigma_0 & \sigma_2 & 0
         \end{bmatrix}
\end{align*}
\begin{align*}
F_{0,0} & = \begin{bmatrix}
            0 & 1 & 0 & 0 \\
            0 & 0 & 1 & 0 \\
            0 & \sigma_2 & 0 & 1 \\
            -\sigma_0 & 0 & 0 & 0
         \end{bmatrix}, \quad
F_{1,0} = \begin{bmatrix}
            0 & 1 & 0 & 0 \\
            0 & 0 & 1 & 0 \\
            \sigma_0 & \sigma_2 & 0 & 1 \\
            0 & -\sigma_0 & 0 & 0
         \end{bmatrix}, \quad
F_{2,0} = \begin{bmatrix}
            0 & 1 & 0 & 0 \\
            -\sigma_0 & 0 & 1 & 0 \\
            0 & \sigma_2 + 2\sigma_0 & 0 & 1 \\
            \sigma_0^2 & 0 & -\sigma_0 & 0
         \end{bmatrix}, \\
F_{0,1} & = \begin{bmatrix}
            0 & 1 & 0 & 0 \\
            0 & -\sigma_2 & 1 & 0 \\
            0 & -\sigma_2^2 & \sigma_2 & 1 \\
            -\sigma_0 & 0 & 0 & 0
         \end{bmatrix}, \quad
F_{1,1} = \begin{bmatrix}
            0 & 1 & 0 & 0 \\
            0 & -\sigma_2 & 1 & 0 \\
            \sigma_0 & -\sigma_2^2 & \sigma_2 & 1 \\
            0 & -\sigma_0 & 0 & 0
         \end{bmatrix}, \quad
F_{2,1} = \begin{bmatrix}
            0 & 1 & 0 & 0 \\
            -\sigma_0 & -\sigma_2 & 1 & 0 \\
            -\sigma_0 \sigma_2 & 2 \sigma_0 - \sigma_2^2 & \sigma_2 & 1 \\
            \sigma_0^2 & \sigma_0 \sigma_2 & -\sigma_0 & 0
         \end{bmatrix}.
\end{align*}

The matrices $F_{i_0, i_2}$ coincide with the ones provided in \cite{Kon21}. In particular, the matrix $F_{0,0}$ corresponds to the well-known regular case (see \cite[Appendix A]{EM99}), and $F_{2,1}$ was obtained in \cite{Vlad04}.

For clarity, denote $\mathcal L_{i_0, i_2} := \mathcal L_I$.
Similarly to the case $n = 2$, it can be shown that, if the problem $\mathcal L = \mathcal L_{i_0, i_2}(\Sigma, U)$ on \textit{the half-line} is equivalent to $\tilde{\mathcal L} = \mathcal L_{\tilde i_0, \tilde i_2}(\tilde \Sigma, \tilde U)$ with $(\tilde i_0, \tilde i_2) \ne (i_0, i_2)$, $P_U = P_{\tilde U}$, then the corresponding inverse problems are equivalent to each other. We obtain the equivalence relations between the problem coefficients $(\Sigma, U) \leftrightarrow (\tilde \Sigma, \tilde U)$ analogous to \eqref{equiv2},\eqref{inf2} for several cases. The other cases can be investigated similarly.

\medskip

{\bf 1.} Consider equivalent problems $\mathcal L = \mathcal L_{0,0}(\Sigma, U)$, $\tilde {\mathcal L} = \mathcal L_{0,1}(\tilde \Sigma, \tilde U)$, where $\sigma_0 = \tilde \sigma_0 \in L_1(\mathbb R_+)$, $\sigma_2 = -\tilde \sigma_2'$, $\sigma_2 \in L_1(\mathbb R_+)$, $\tilde \sigma_2 \in (L_1 \cap L_2)(\mathbb R_+)$. The quasi-derivatives for the problems $\mathcal L$ and $\tilde {\mathcal L}$ are defined via \eqref{quasi} by using the entries of the matrix functions $F_{0,0}$ and $F_{0,1}$, respectively. Substituting these quasi-derivatives into the equivalence relations for the boundary condition forms: $U_s(y) = \tilde U_s(y)$, $s = \overline{1, 4}$, we derive
$$
l_{2,1} = \tilde l_{2,1}, \quad l_{3,1} = \tilde l_{3,1}, \quad l_{3,2} = \tilde \sigma_2(0) + \tilde l_{3,2}, \quad l_{4,1} = \tilde l_{4,1}, \quad l_{4,2} = \tilde l_{4,3} \tilde \sigma_2(0) + \tilde l_{4,2}, \quad l_{4,3} = \tilde l_{4,3}.
$$
Consequently, the equivalence $(\Sigma, U) \leftrightarrow (\tilde \Sigma, \tilde U)$ is
given by the relations
\begin{gather} \nonumber
l_{3,2} = \tilde l_{3,2} + \int_0^{\iy} \sigma_2(t) \, dt, \quad
\tilde \sigma_2(x) = \int_x^{\iy} \sigma_2(t) \, dt, \\ \label{case1}
(\sigma_0, l_{2,1}, l_{3,1}, l_{4,1}, l_{4,3}) = (\tilde \sigma_0, \tilde l_{2,1}, \tilde l_{3,1}, \tilde l_{4,1}, \tilde l_{4,3}), \quad l_{4,2} = \tilde l_{4,3} \tilde \sigma_2(0) + \tilde l_{4,2}.
\end{gather}

\medskip

{\bf 2.} Consider equivalent problems $\mathcal L = \mathcal L_{0,1}(\Sigma, U)$, $\tilde {\mathcal L} = \mathcal L_{1,1}(\tilde \Sigma, \tilde U)$, where $\sigma_0 = -\tilde \sigma_0'$, $\sigma_0 \in L_1(\mathbb R_+)$, $\tilde \sigma_0 \in L_1 (\mathbb R_+)$, $\sigma_2 = \tilde \sigma_2 \in (L_1 \cap L_2)(\mathbb R_+)$. The one-to-one correspondence $(\Sigma, U) \leftrightarrow (\tilde \Sigma, \tilde U)$
is given by the relations
\begin{gather} \nonumber
\tilde l_{4,1} = l_{4,1} + \int_0^{\iy} \sigma_0(t) \, dt, \quad
\tilde \sigma_0(x) = \int_x^{\iy} \sigma_0(t) \, dt, \\ \label{case2}
(\sigma_2, l_{2,1}, l_{3,1}, l_{3,2}, l_{4,2}, l_{4,3}) = (\tilde \sigma_2, \tilde l_{2,1}, \tilde l_{3,1}, \tilde l_{3,2}, \tilde l_{4,2}, \tilde l_{4,3}).
\end{gather}

\medskip

{\bf 3.} Consider equivalent problems $\mathcal L = \mathcal L_{1,0}(\Sigma, U)$, $\tilde {\mathcal L} = \mathcal L_{2,0}(\tilde \Sigma, \tilde U)$, where $\sigma_0 = -\tilde \sigma_0'$, $\sigma_0 \in L_1(\mathbb R_+)$, $\sigma_0$ is continuous at zero, $\tilde \sigma_0 \in (L_1 \cap L_2)(\mathbb R_+)$, $\sigma_2 = \tilde \sigma_2 \in L_1(\mathbb R_+)$. The one-to-one correspondence $(\Sigma, U) \leftrightarrow (\tilde \Sigma, \tilde U)$
is given by the relations
\begin{gather} \nonumber
l_{3,1} = \tilde l_{3,1} + \int_0^{\iy} \sigma_0(t) \, dt, \quad
\tilde \sigma_0(x) = \int_x^{\iy} \sigma_0(t) \, dt, \\ \label{case3}
\left. \begin{array}{c}
(\sigma_2, l_{2,1}, l_{3,2}, l_{4,3}) = (\tilde \sigma_2, \tilde l_{2,1}, \tilde l_{3,2}, \tilde l_{4,3}) \\ 
\quad l_{4,2} = \tilde l_{4,2} - 2 \tilde \sigma_0(0), \quad l_{4,1} - \sigma_0(0) = \tilde l_{4,3} \tilde \sigma_0(0) + \tilde l_{4,1}
\end{array}\right\}
\end{gather}

\medskip

Proceed to the \textit{finite interval} case.
Since $\sigma_1 = 0$ is known, put $\mathcal N := \{ 1 \}$. 
Taking Theorem~\ref{thm:uniq} and Remark~\ref{rem:uniqN} into account, we conclude that the numbers
\begin{equation} \label{exl}
l_{4-s, 1} + l_{4, s + 1}, \quad s = \overline{0, i_0-1},
\end{equation}
and $l_{3,2}$ if $i_2 = 1$ have to be given together with $P_U$ and $M(\la)$ for the unique reconstruction of $\mathcal L_I(\Sigma, U, V)$. Alternatively, one can give either $\{ l_{s,1} \}_{s = 4 - i_0 + 1}^{4}$ or $\{ l_{4,s} \}_{s = 1}^{i_0}$ instead of \eqref{exl}. For definiteness, suppose that we have $\{ l_{s,1} \}_{s = 4 - i_0 + 1}^{4}$.

Let us shortly denote by $\mbox{IP}_{i_0, i_2}$ the inverse problem for the corresponding $i_0$ and $i_2$. Suppose that $P_U$ and $M(\la)$ are given. For the recovery of $\Sigma$, $U$, and $V$ the following
boundary condition coefficients $l_{k,j}$ are required:
\begin{equation*}
    \mbox{IP}_{0,0}: \text{none}, \quad
    \mbox{IP}_{1,0}: l_{4,1}, \quad
    \mbox{IP}_{2,0}: l_{3,1}, l_{4,1} \quad
    \mbox{IP}_{0,1}: l_{3,2}, \quad
    \mbox{IP}_{1,1}: l_{3,2}, l_{4,1}, \quad
    \mbox{IP}_{2,1}: l_{3,1}, l_{3,2}, l_{4,1}.
\end{equation*}

It can be shown that, if the problem $\mathcal L_{i_0, i_2}(\Sigma, U, V)$ is equivalent to $\tilde {\mathcal L}_{\tilde i_0, \tilde i_2}$ with $(\tilde i_0, \tilde i_2) \ne (i_0, i_2)$, $P_U = P_{\tilde U}$, then the corresponding inverse problems $\mbox{IP}_{i_0, i_2}$ and $\mbox{IP}_{\tilde i_0, \tilde i_2}$ are equivalent to each other.

For simplicity, assume that the matrix $P_V$ is defined by the permutation $(p_{1,1}, p_{2,1}, p_{3,1}, p_{4,1}) = (3, 2, 1, 0)$. Thus, the Weyl solutions \eqref{bcPhi2} is defined by the following linear forms:
$$
V_2(y) = y^{[2]}(1) + v_{2,2} y^{[1]}(1) + v_{2,1} y(1), 
\quad V_3(y) = y^{[1]}(1) + v_{3,1} y(1), \quad V_4(y) = y(1).
$$
Note that the linear form $V_1(y)$ does not participate in \eqref{bcPhi2}. Moreover, the Weyl solutions $\Phi_k(x, \la)$ do not depend on the coefficients $(v_{2,1}, v_{2,2}, v_{3,1})$. Therefore, all matrices $V$ with the fixed $P_V$ are equivalent in the sense of Definition~\ref{def:V}. Hence, $L_V$ cannot be uniquely recovered from the Weyl matrix $M(\la)$ even if $\Sigma$ and $U$ are known. In order to prove the inverse problem equivalence in this case, we only need to obtain the equivalence relations $(\Sigma, U) \leftrightarrow (\tilde \Sigma, \tilde U)$. Below, we consider the cases 1-3 similar to the ones studied for the half-line.

\medskip

{\bf 1.} Consider equivalent problems $\mathcal L = \mathcal L_{0,0}(\Sigma, U, V)$, $\tilde L = \mathcal L_{0,1}(\tilde \Sigma, \tilde U, \tilde V)$, where $\sigma_0 = \tilde \sigma_0 \in L_1(0,1)$, $\sigma_2 = -\tilde \sigma_2' \in L_1(0,1)$. If $\tilde l_{3,2}$ is fixed, then the one-to-one correspondence
$$
(\sigma_0, \sigma_2, l_{2,1}, l_{3,1}, l_{3,2}, l_{4,1}, l_{4,2}, l_{4,3}) \leftrightarrow (\tilde \sigma_0, \tilde \sigma_2, \tilde l_{2,1}, \tilde l_{3,1}, \tilde l_{4,1}, \tilde l_{4,2}, \tilde l_{4,3})
$$
is given by \eqref{case1} and
$$
\tilde \sigma_2(x) = l_{3,2} - \tilde l_{3,2} - \int_0^x \sigma_2(t) \, dt.
$$
Hence, $\mbox{IP}_{0, 0}$ is equivalent to $\mbox{IP}_{0,1}$.

\medskip

{\bf 2.} Consider equivalent problems $\mathcal L = \mathcal L_{0,1}(\Sigma, U, V)$, $\tilde L = \mathcal L_{1,1}(\tilde \Sigma, \tilde U, \tilde V)$, where $\sigma_0 = -\tilde \sigma_0' \in L_1(0,1)$, $\sigma_2 = \tilde \sigma_2 \in L_2(0,1)$. If $\tilde l_{4,1}$ is fixed, then the one-to-one correspondence 
$$
(\sigma_0, \sigma_2, l_{2,1}, l_{3,1}, l_{3,2}, l_{4,1}, l_{4,2}, l_{4,3}) \leftrightarrow (\tilde \sigma_0, \tilde \sigma_2, \tilde l_{2,1}, \tilde l_{3,1}, \tilde l_{3,2}, \tilde l_{4,2}, \tilde l_{4,3})
$$
is given by \eqref{case2} and
$$
    \tilde \sigma_0(x) = \tilde l_{4,1} - l_{4,1} - \int_0^x \sigma_0(t) \, dt.
$$
Hence, $\mbox{IP}_{0, 1}$ is equivalent to $\mbox{IP}_{1,1}$.

\medskip

{\bf 3.} Consider equivalent problems $\mathcal L = \mathcal L_{1,0}(\Sigma, U, V)$, $\tilde L = \mathcal L_{2,0}(\tilde \Sigma, \tilde U, \tilde V)$, where $\sigma_0 = -\tilde \sigma_0' \in L_1(0, 1)$, $\sigma_0$ is continuous at zero, $\sigma_2 = \tilde \sigma_2 \in L_1(\mathbb R_+)$. If $\tilde l_{3,1}$ is fixed, then the one-to-one correspondence 
$$
(\sigma_0, \sigma_2, l_{2,1}, l_{3,1}, l_{3,2}, l_{4,1}, l_{4,2}, l_{4,3}) \leftrightarrow (\tilde \sigma_0, \tilde \sigma_2, \tilde l_{2,1}, \tilde l_{3,2}, \tilde l_{4,1}, \tilde l_{4,2}, \tilde l_{4,3})
$$
is given by \eqref{case3} and
$$
    \tilde \sigma_0(x) = l_{3,1} - \tilde l_{3,1} - \int_0^x \sigma_0(t) \, dt.
$$
Hence, $\mbox{IP}_{1, 0}$ is equivalent to $\mbox{IP}_{2,0}$.

\medskip

{\bf Funding.} This work was supported by Grant 21-71-10001 of the Russian Science Foundation, https://rscf.ru/en/project/21-71-10001/.

\medskip

{\bf Conflict of interest.} The author declares that this paper has no conflict of interest.

\medskip

\noindent Natalia Pavlovna Bondarenko \\
1. Department of Applied Mathematics and Physics, Samara National Research University, \\
Moskovskoye Shosse 34, Samara 443086, Russia, \\
2. Department of Mechanics and Mathematics, Saratov State University, \\
Astrakhanskaya 83, Saratov 410012, Russia, \\
e-mail: {\it BondarenkoNP@info.sgu.ru}

\end{document}